  \newtheorem{corollary}{Corollary}
\newtheorem{lemma}{Lemma}
\newtheorem{theorem}{Theorem}
\newtheorem{proposition}{Proposition}
\newtheorem{claim}{Claim}
\newtheorem{othertheorem}{Theorem}
\theoremstyle{definition}
\newtheorem{definition}{Definition}
\theoremstyle{remark}
\newtheorem{remark}{Remark}
\newtheorem{example}{Example}
\numberwithin{equation}{section}
\numberwithin{theorem}{section}
\numberwithin{definition}{section}
\numberwithin{remark}{section}
\numberwithin{example}{section}
\numberwithin{lemma}{section}
\numberwithin{property}{section}
\numberwithin{proposition}{section}
\numberwithin{claim}{section}
\numberwithin{conj}{section}
\numberwithin{corollary}{section}
\newcommand{\D}{{\mathbb D}}
\newcommand{\C}{{\mathbb C}}
\newcommand{\N}{{\mathbb N}}
\newcommand{\F}{{\mathcal F}}
\newcommand{\J}{{\mathcal J}}
\newcommand{\CC}{{\overline \C}}
\title[Uniformly perfect and HNUP non-autonomous attractors]{Uniformly perfect and Hereditarily non Uniformly Perfect analytic and conformal non-autonomous attractor sets}
\author{Mark Comerford, Kurt Falk, Rich Stankewitz, and Hiroki Sumi}
\thanks{2010 Mathematics Subject Classification: Primary 30D05, 28A80, 37F10.  \\
Key words and phrases:  Attractor sets, Limit sets, Uniformly perfect, Iterated function systems, Non-autonomous iteration,
 Julia sets, Hereditarily non uniformly perfect.
}
\email{mcomerford@math.uri.edu}
\email{falk@math.uni-kiel.de}
\email{rstankewitz@bsu.edu}
\email{sumi@math.h.kyoto-u.ac.jp}
\begin{document}
\maketitle

\centerline{\scshape Mark Comerford}
\medskip
{\footnotesize
 \centerline{Department of Mathematics}
    \centerline{University of Rhode Island}
    \centerline{5 Lippitt Road, Room 102F}
   \centerline{Kingston, RI 02881, USA}
} 

\medskip


\centerline{\scshape Kurt Falk}
\medskip
{\footnotesize
 \centerline{Mathematisches Seminar}
   \centerline{Christian-Albrechts-Universit\"at zu Kiel}
   \centerline{Ludewig-Meyn-Str. 4, 24118 Kiel, Germany}
} 

\medskip

\centerline{\scshape Rich Stankewitz }
\medskip
{\footnotesize
 \centerline{Department of Mathematical Sciences}
   \centerline{Ball State University}
   \centerline{Muncie, IN 47306, USA}
} 

\medskip

\centerline{\scshape Hiroki Sumi }
\medskip
{\footnotesize
 \centerline{Course of Mathematical Science}
 \centerline{Department of Human Coexistence}
 \centerline{Graduate School of Human and Environmental Studies}
    \centerline{Kyoto University}
    \centerline{Yoshida-nihonmatsu-cho, Sakyo-ku}
   \centerline{Kyoto 606-8501, Japan}
} 


\begin{abstract}
Conditions are given which imply that certain non-autonomous analytic iterated function
systems (NIFS's) in the complex plane $\C$ have uniformly perfect
attractor sets, while other conditions imply the attractor is pointwise thin, and thus hereditarily non uniformly perfect.  Examples are given to illustrate the main theorems, as well as to indicate how they generalize other results.  Examples are also given to illustrate how possible generalizations of corresponding results for autonomous IFS's do not hold in general in this more flexible setting.  Further, applications to non-autonomous Julia sets are given.

Lastly, since our definition of NIFS is in some ways more general than others found in the literature, a careful analysis is given to show when certain familiar relationships still hold, along with detailed examples showing when other relationships do not hold.
\end{abstract}

\section{Introduction and statements of the main theorems}\label{Intro}

The aim of this paper is two-fold, the first is a thickness result while the second relates to a corresponding notion of thinness.  In particular, we present conditions that imply the attractors in $\C$ of certain non-autonomous iterated function systems are uniformly perfect, and then, looking to the other extreme, give conditions for attractors to be pointwise thin (and thus hereditarily non uniformly perfect).

Uniformly perfect sets, which are defined in Section~\ref{SectDefsAndFacts}, were
introduced by A.~F.~Beardon and Ch.~Pommerenke in 1978 in~\cite{BP}.
Such sets cannot be separated by annuli that are too large in modulus (equivalently, large
ratio of outer to inner radius).
Thus, uniform perfectness, in a sense, measures how “thick”
a set is near each of its points and is related in spirit to many other notions of thickness such as
Hausdorff content and dimension, logarithmic capacity and density, H\"older regularity, and positive
injectivity radius for Riemann surfaces. For an excellent survey of uniform perfectness and how it
relates to these and other such notions see Pommerenke~\cite{P} and Sugawa~\cite{Sug2}.

The concept of hereditarily non uniformly perfect was introduced in
\cite{SSS} and can be thought of as a thinness criterion for sets which is a strong version of failing to be uniformly perfect.
In particular, a compact set $E \subset \C$ is called \textit{hereditarily non uniformly perfect} (HNUP) if no subset of $E$ is uniformly perfect.  Often a compact set is shown to be HNUP by showing it satisfies the stronger property of \emph{pointwise thinness} (see Definitioin~\ref{DefPointThin}).  This is done in several examples in~\cite{SSS,CSS}, and will be done in this paper each time a set is shown to be HNUP.

When the maps are all complex analytic and the IFS is \textit{autonomous} (see Section~\ref{SecAutAttractMainThm}), uniform perfectness results of the type we seek are found in~\cite{RS4}.  We also note that~\cite{UrbBook} includes related results for similar systems (which require an open set condition).  Certain constructions in~\cite{SSS} are \textit{non-autonomous} iterated function systems shown to have uniformly perfect attractors (though those examples were not presented as attractors, but rather as Cantor-like constructions - see Example~\ref{ExHNUPcantor} in this paper), while other examples there are not uniformly perfect.  We look to generalize those results here, and we begin by following~\cite{RU} to introduce the main framework and definitions (with some key differences) of \emph{non-autonomous iterated function systems} (NIFS's).  We also note that attractors of NIFS's are often Moran-set constructions (see~\cite{Wen} for a good exposition of such).

\begin{definition}[NIFS] \label{defNIFS}
Let $(U, X)$ be a pair where $U$ is a non-empty open connected subset of $\C$ and $X \subset U$ is compact.
A \emph{non-autonomous iterated function system} (NIFS) $\Phi$ on the pair $(U,X)$ is given by a sequence $\Phi^{(1)},\Phi^{(2)},\Phi^{(3)}, \dots$, where each $\Phi^{(j)}$ is a collection of non-constant functions $(\varphi_i^{(j)}:U \to X)_{i \in I^{(j)}}$ such that there exists
$0<s<1$ and a metric $d$ on $U$ where $d(\varphi(z),\varphi(w))\leq s d(z,w)$
for all $z, w \in X$ and all $\varphi \in \cup_{j=1}^\infty \Phi^{(j)}$.  We also stipulate that $d$ induces the Euclidean topology on $X$.  Thus this system is uniformly
contracting on the forward invariant (see definition below) metric space $(X,d)$.
\end{definition}

\begin{definition}[Forward Invariant] \label{defForInv}
A set $\widetilde{X} \subseteq U$ is called \textit{forward invariant under} $\Phi$ when $\varphi(\widetilde{X})\subseteq \widetilde{X}$ for all $\varphi \in \cup_{j=1}^\infty \Phi^{(j)}$.
\end{definition}

We define a NIFS and its corresponding attractor set (see Definition~\ref{LimitSetdef})
to be \emph{analytic} (respectively, \emph{conformal}) if all the maps are complex analytic
(respectively, conformal) on $U$.  Note that here and throughout conformal means analytic and one-to-one (globally on $U$, not just locally).

Important differences from~\cite{RU} in the above setup are: 1)~We do not impose that $X$ have other geometric properties such as convexity or a smooth boundary. 2)~The maps do not need to be conformal.  In fact, they do not even need to be locally conformal.  3)~In~\cite{RU}, the focus is on certain measures and dimension of the attractor sets, and so it is required that each $I^{(j)}$ be a finite or countably infinite index set.  We, however, do not make any such assumption.  4)~We do not in general impose an \textit{open set condition}, and, in fact, there can be substantial overlap in sets of the form $\varphi_a^{(j)}(X)$ and $\varphi_b^{(j)}(X)$.  However, for several of our HNUP results we shall require the \textit{Strong Separation Condition}:
$\varphi^{(j)}_{a}(X) \cap \varphi^{(j)}_{b}(X)=\emptyset,$ for each $j\in \N$ and distinct ${a}, {b} \in I^{(j)}$. 5)~The main object of interest to this paper is the \textit{analytic} NIFS, and so the condition imposed that each $\varphi$ map $U$ into $X$ allows us, under this condition of analyticity, to
take the metric $d$ to be the hyperbolic metric on $U$ (see Section~\ref{SectDefsAndFacts}).

Given an NIFS, we wish to study the limit set (or attractor) which we can define after the next definition.

\begin{definition}[Words] \label{Wordsdef}
For each $k \in \N$, we define the symbolic spaces \[I^k:=\prod_{j=1}^k I^{(j)} \qquad \textrm{and} \qquad  I^\infty:=\prod_{j=1}^\infty I^{(j)}.\]
\end{definition}
Note that a $k$-tuple $(\omega_1, \dots, \omega_k) \in I^k$ may be identified with the corresponding word $\omega_1\dots \omega_k$.  When $\omega^* \in I^\infty$ has $\omega^*_j = \omega_j$ for $j=1, \dots, k$, we call $\omega^*$ an \textit{extension} of $\omega=\omega_1\dots \omega_k \in I^k$.

\begin{definition} \label{LimitSetdef}
For all $k \in \N$ and $\omega= \omega_1 \cdots \omega_k \in I^k$, we define $\varphi_\omega:=\varphi_{\omega_1}^{(1)} \circ \cdots \circ \varphi_{\omega_k}^{(k)}$ with \[X_\omega:= \varphi_\omega (X) \qquad \textrm{      and       } \qquad X_k:= \bigcup_{\omega \in I^k} X_\omega.\]  The \emph{limit set} (or \emph{attractor}) of $\Phi$ is defined as \[J=J(\Phi):=\bigcap_{k=1}^\infty X_k.\]
\end{definition}

\begin{remark}
The attractor $J$ does not have to be compact.  For example, $J$ is not compact for the autonomous system (see Section~\ref{SecAutAttractMainThm}) given in Example 4.3 of~\cite{RS4}.  However, if each index set $I^{(j)}$ is finite, then each $X_k$ is compact and hence so is $J$.
\end{remark}

In order to state the main results, Theorems~\ref{Thm2Main} and~\ref{ThmAnalNIFS} (regarding uniform perfectness) and Theorems~\ref{Thm2MainHNUP} and~\ref{ThmAnalNIFS-HNUP} and Corollary~\ref{CorHNUP} (regarding pointwise thinness), we first present the following notation.

Given an NIFS $\Phi^{(1)},\Phi^{(2)},\Phi^{(3)}, \dots$ on some $(U,X)$, we note that by excluding $\Phi^{(1)},\Phi^{(2)},\dots,\Phi^{(j-1)},$ the sequence $\Phi^{(j)},\Phi^{(j+1)},\Phi^{(j+2)}, \dots$ also forms an NIFS (which formally would be  $\widetilde{\Phi}^{(1)},\widetilde{\Phi}^{(2)},\widetilde{\Phi}^{(3)}, \dots$ where each $\widetilde{\Phi}^{(k)}=\Phi^{(k+j-1)}$).  The new NIFS would then induce sets as in Definition~\ref{LimitSetdef}, which we denote as $X_\omega^{(j)}, X_k^{(j)}$, and $J^{(j)}$ with the superscript used to indicate the relationship to the original NIFS.  In particular, for the original NIFS the sets $X_k$ may also be denoted $X_k^{(1)}$.  See Example~\ref{ExCantor}, illustrated in Figure~\ref{TablePic}, noting that the superscript indicates the column and the subscript indicates the row where a given set resides (noting that row 0 refers to the top row).

\begin{theorem}\label{Thm2Main}
Let $\Phi$ be a conformal NIFS on $(U, X)$.  Suppose
\begin{enumerate}[(i)]
\item (M\"obius Condition)  each map in $\varphi \in \cup_{j \in \N} \Phi^{(j)}$ is M\"obius, and
\item (Two Point Separation Condition) there exists $\delta>0$ such that each $\Phi^{(j)}$, for $j\in \N$, contains  (not necessarily distinct) maps $\varphi^{(j)}_{a}$ and $\varphi^{(j)}_{b}$ such that for some (not necessarily distinct) $z_a, z_b \in J^{(j+1)}$ we have $|\varphi^{(j)}_{a}(z_a)-\varphi^{(j)}_{b}(z_b)|\geq \delta$, and
\item (Derivative Condition) there exists $\eta>0$ such that for all $\varphi \in \cup_{j \in \N} \Phi^{(j)}$ we have $|\varphi'| \geq \eta$ on $X$.
\end{enumerate}
Then each $\overline{J^{(j)}}$ is uniformly perfect.  Furthermore, for a given $(U, X)$, the modulus of any annulus separating any $\overline{J^{(j)}}$ is bounded above by a constant depending only on $\delta$ and $\eta$.
\end{theorem}

\begin{remark}\label{RemSep}
Instead of verifying the Two Point Separation Condition as stated, it is often easier to check any of the increasingly stronger conditions:
\begin{enumerate}[(1)]
\item there exists $\delta>0$ such that each $\Phi^{(j)}$, for $j\in \N$, contains at least two maps $\varphi^{(j)}_{a}$ and $\varphi^{(j)}_{b}$ such that for some $z \in J^{(j+1)}$ we have $|\varphi^{(j)}_{a}(z)-\varphi^{(j)}_{b}(z)|\geq \delta$,
\item there exists $\delta>0$ such that each $\Phi^{(j)}$, for $j\in \N$, contains at least two maps $\varphi^{(j)}_{a}$ and $\varphi^{(j)}_{b}$ such that for \underline{all $z \in X$} we have $|\varphi^{(j)}_{a}(z)-\varphi^{(j)}_{b}(z)|\geq \delta$,
\item \label{ItemSepCond} there exists $\delta>0$ such that each $\Phi^{(j)}$, for $j\in \N$, contains at least two maps $\varphi^{(j)}_{a}$ and $\varphi^{(j)}_{b}$ such that the images $\varphi^{(j)}_{a}(X)$ and $\varphi^{(j)}_{b}(X)$ are at least a distance $\delta$ apart.
\end{enumerate}
Note that~\ref{ItemSepCond} is much weaker than what in the literature is often called the \textit{Strong Separation Condition} for \textit{finite autonomous systems} (stronger than the Strong Separation Condition stated earlier), which can be equivalently stated as such: there exists $\delta>0$ such that for \emph{all distinct maps}  $\varphi^{(j)}_{a}, \varphi^{(j)}_{b} \in \Phi^{(j)}$, for $j\in \N$, the images $\varphi^{(j)}_{a}(X)$ and $\varphi^{(j)}_{b}(X)$ are at least a distance $\delta$ apart.

We also note that this Two Point Separation Condition shows that, for each $j \in \N$, $\mathrm{diam}(J^{(j)})\geq \delta$ since for any $z_a, z_b \in J^{(j+1)}$ and $\varphi^{(j)}_{a}, \varphi^{(j)}_{b} \in \Phi^{(j)}$,  we have, by the inclusion proved in Remark~\ref{RemInvarCondLimitSets}, $\varphi^{(j)}_{a}(z_a), \varphi^{(j)}_{b}(z_b) \in J^{(j)}$.  In the proof of Theorem~\ref{Thm2Main}, the Two Point Separation Condition is only used to obtain a uniform lower bound on $\mathrm{diam}(J^{(j)})$.
\end{remark}

\begin{theorem}\label{ThmAnalNIFS}
Suppose $\Phi$ is an analytic NIFS such that $\overline{J^{(n)}}$, for some integer $n >1$, is uniformly perfect (e.g., when the NIFS given by $\Phi^{(n)},\Phi^{(n+1)}, \Phi^{(n+2)}, \dots,$ satisfies the hypotheses of Theorem~\ref{Thm2Main}).  Suppose also that $\widetilde{\Phi}^{(1)}=\Phi^{(1)} \circ \cdots \circ \Phi^{(n-1)}$ is finite.  Then $\overline{J(\Phi)}$ is uniformly perfect.
\end{theorem}

We now present the main results regarding pointwise thinness, which is defined in Section~\ref{SectDefsAndFacts} along with other relevant terms.  Theorem~\ref{Thm2MainHNUP} and Corollary~\ref{CorHNUP} concern conformal NIFS's having the Strong Separation Condition, and Theorem~\ref{ThmAnalNIFS-HNUP} concerns analytic NIFS's which do not require the Strong Separation Condition but do require a certain type of separation condition.  In order to present these results precisely, we must first introduce the projection map.

\begin{remark}[Projection Map]\label{RemProjMap}
Consider $\omega^* \in I^\infty$ and note that the compact sets $\varphi_{\omega^*_1 \cdots \omega^*_n}(X)$ decrease with   $\mathrm{diam}_d(\varphi_{\omega^*_1 \cdots \omega^*_n}(X)) \leq s^n \mathrm{diam}_d(X) \to 0$ as $n \to \infty$.  Hence $\cap_{n=1}^\infty \varphi_{\omega^*_1 \cdots \omega^*_n}(X)$ contains just a single point that we call $\pi(\omega^*)$.  Note that $\pi(\omega^*) \in J$ since it clearly belongs to each $\varphi_{\omega^*_1 \cdots \omega^*_n}(X) \subseteq X_n$.  We call $\pi_\Phi:I^\infty \to J$ the \textit{projection map}.

Further note that for any non-empty compact $\widetilde{X} \subseteq X$ that is forward invariant under $\Phi$, we have that  $\cap_{n=1}^\infty \varphi_{\omega^*_1 \cdots \omega^*_n}(\widetilde{X})=\cap_{n=1}^\infty \varphi_{\omega^*_1 \cdots \omega^*_n}(X)$ since each is a singleton set with the left set being a subset of the right set.  We summarize this by saying that the projection map $\pi_\Phi$ is independent of the choice of non-empty compact forward invariant set $X$.
\end{remark}

\begin{theorem}\label{Thm2MainHNUP}
Let $\Phi$ be a conformal NIFS on $(U, X)$, with $X$ connected, satisfying the Strong Separation Condition and the following
\begin{description}
  \item[Separating Annuli Condition] there exists a sequence of conformal annuli\\ $\{A_{j_n}\}_{n \in \N}$, where each $A_{j_n}$ and the bounded component of $\C \setminus A_{j_n}$ are in $X$, such that for all $n \in \N$ the annulus $A_{j_n}$ separates $X_1^{(j_n)}$ where $\mathrm{mod}~A_{j_n} \to \infty$ as $n \to \infty$.
\end{description}
For each $n \in \N$, choose $m_n \in I^{(j_n)}$ such that the set $\varphi_{m_n}^{(j_n)}(X)$ is surrounded by $A_{j_n}$ (which can be done since $X$ is connected and $A_{j_n}$ separates $X_1^{(j_n)}$), and fix $\omega = (\omega_1, \omega_2, \dots) \in I^\infty$ such that $\omega_{j_n}=m_n$ for all $n \in \N$.  Then, $J$ is pointwise thin at $\pi_\Phi(\omega)$.
\end{theorem}

\begin{remark}
The Separating Annuli Condition can be visualized in Figure~\ref{TablePic} in Example~\ref{ExCantor} by considering annuli $A_j$ of maximum modulus separating the two components in each $X_1^{(j)}$ (in row 1), noting that $\mathrm{mod}~A_{j} \to \infty$ exactly when $a_j \to 0$.
\end{remark}

\begin{corollary}\label{CorHNUP}
Let $\Phi$ be a conformal NIFS on $(U, X)$, with $X$ connected, satisfying the Strong Separation Condition.  Suppose along some subsequence $j_n$, we have $2 \leq \#\Phi^{(j_n)} < \infty$ for all $n \in \N$.
Define, for each $n \in \N$,
$$b_{j_n}=\min\{\mathrm{dist}(\varphi^{(j_n)}_{i}(X),\partial X):i \in I^{(j_n)} \},$$
$$\delta_{j_n}=\min\{\mathrm{dist}(\varphi^{(j_n)}_{a}(X),\varphi^{(j_n)}_{b}(X)):a,b \in I^{(j_n)} \textrm{ with }a \neq b \}$$  and  $$\eta_{j_n}=\max \{\mathrm{diam}(\varphi_i^{(j_n)}(X)):i \in I^{(j_n)}\}.$$
Suppose for some $c>1$, we have $\delta_{j_n} \leq c b_{j_n}$ for all $n \in \N$.
Further suppose $\frac{\delta_{j_n}}{\eta_{j_n}} \to \infty$ as $n \to \infty$.
Then, $J=\pi_\Phi(I^\infty)$ is pointwise thin (and thus HNUP when $J$ is compact).
\end{corollary}

\begin{remark}
Since each $\delta_{j_n} \leq \mathrm{diam}(X)$, we see that we may choose $c= \frac{\mathrm{diam}(X)}{\inf_n\{b_{j_n}\}}$ when $\inf_n\{b_{j_n}\}>0$.
\end{remark}

\begin{remark}
Since each $\delta_{j_n} \leq \mathrm{diam}(X)$, we see that for $\frac{\delta_{j_n}}{\eta_{j_n}} \to \infty$ we must have $\eta_{j_n} \to 0$.  In such a situation then, $\Phi$ cannot satisfy the Derivative Condition, a critical assumption in the proof of the uniform perfectness of $J$ in Theorem~\ref{Thm2Main} (see Remark~\ref{RemDerivCritical}).
\end{remark}


\begin{remark}
Corollary~\ref{CorHNUP} applies much more generally when we recall that one can combine stages in the manner described in Remark~\ref{RemCombStages}.  Specifically, we may show  ${J(\Phi)}$ is pointwise thin by applying Corollary~\ref{CorHNUP} to any $\widetilde{\Phi}$ created by combining stages in $\Phi$.  This technique of combining stages is used later to analyze Example~\ref{ExHNUP}.
\end{remark}

\begin{theorem}\label{ThmAnalNIFS-HNUP}
Suppose
\begin{enumerate}[(i)]
\item  $\Phi$ is an analytic NIFS such that $\overline{J^{(n)}}$, for some integer $n >1$, is pointwise thin (e.g., when the NIFS given by $\Phi^{(n)},\Phi^{(n+1)}, \Phi^{(n+2)}, \dots,$ satisfies the hypotheses of Corollary~\ref{CorHNUP} with each $\Phi^{(j)}$ finite), and
\item $\widetilde{\Phi}^{(1)}=\Phi^{(1)} \circ \cdots \circ \Phi^{(n-1)}$ is finite with $\varphi_a(\overline{J^{(n)}}) \cap \varphi_b(\overline{J^{(n)}})=\emptyset$ for all distinct $\varphi_a, \varphi_b \in \widetilde{\Phi}^{(1)}$ (e.g., when $\Phi$ satisfies the Strong Separation Condition), and
\item for every $\varphi_a\in \widetilde{\Phi}^{(1)}$ and $z \in \overline{J^{(n)}}$, we have that $z$ is the only point of $\overline{J^{(n)}}$ which maps to $\varphi_a(z)$ (e.g., when each map in $\widetilde{\Phi}^{(1)}$ is conformal).
\end{enumerate}
Then $\overline{J(\Phi)}$ is pointwise thin.
\end{theorem}

The remainder of the paper is organized as follows. Section~\ref{SectPrelim} establishes the preliminary results needed later as well as provides several examples, including Example~\ref{ExCantor} which graphicly highlights key relationships.  This section also identifies some important aspects that show how the systems of study in this paper can be more delicate than related systems found in the literature.
Section~\ref{SecAutAttractMainThm} reviews known results for autonomous attractors (where $I^{(j)}$ and $\Phi^{(j)}$ are independent of $j$) and relates them to the main results for non-autonomous attractors stated in Theorems~\ref{Thm2Main} and~\ref{ThmAnalNIFS}.
Section~\ref{SectDefsAndFacts} contains basic results and definitions.  Section~\ref{SecExamples} presents some examples to demonstrate why the possible generalizations of results for autonomous systems (presented as Theorems~\ref{cor3}-\ref{cor} in Section~\ref{SecAutAttractMainThm}) do not hold for general NIFS's.
In Section~\ref{SecExamples} we show that our main results generalize Theorem 4.1 of~\cite{SSS}.
Section~\ref{SectApplicationsJulia} contains applications of Corollary~\ref{CorHNUP} to non-autonomous Julia sets along polynomial sequences.
Section~\ref{SectProofs} is then used to prove Theorems~\ref{Thm2Main} and~\ref{ThmAnalNIFS} on uniform perfectness, and Theorem~\ref{Thm2MainHNUP}, Corollary~\ref{CorHNUP}, and Theorem~\ref{ThmAnalNIFS-HNUP} on pointwise thinness.

\section{Key preliminaries regarding the projection map, dependence on $X$, invariance conditions, and stage combination}\label{SectPrelim}

We begin this section by establishing some notation.

\noindent \textbf{Notation to be used throughout:}
Let $q$ be a metric.
For a set $F\subseteq
\C$, we define its \textit{diameter} to be $\mathrm{diam}_q F=\sup\{q(z,w):z, w \in F\}$ and $\epsilon$-\textit{ball about} $F$ to be
$B_q(F,\epsilon) = \{z:\mathrm{dist}_q(z,F) <\epsilon\}$ where
$\mathrm{dist}_q(z,F)=\inf\{q(z,w):w \in F\}$.  Also, for $w \in \C$ and $r>0$ we define the \textit{disk} and \textit{circle}, respectively, by
$\Delta_q(w,r)=\{z:q(z,w)<r\}$ and $C_q(w,r)=\{z:q(z,w)=r\}.$ If no
metric is noted, then it is assumed that the metric is the
Euclidean metric.   Lastly, the open unit disk in $\C$ is denoted $\D$.

%

\begin{remark}[Pieces of $X_k$]\label{RemPiecesContainLimitPts}
The limit set $J = \cap_{k=1}^\infty X_k$ is a decreasing intersection of the $X_k$, but an important facet of each $X_k$ is that it is the union of what we call the \textit{pieces} of $X_k$, each of which must contain both a limit point and a fixed point.  More precisely, note that for any $k \in \N$ and $\omega= \omega_1 \cdots \omega_k \in I^k$, we have that the \textit{piece} $\varphi_\omega(X)$ of $X_k$, for which $\mathrm{diam}_d(\varphi_\omega(X))\leq s^k\mathrm{diam}_d(X)$, contains both the fixed point of the contraction $\varphi_\omega$ and the point $\pi_\Phi(\omega^*) \in J$ for any extension $\omega^* \in I^\infty$ of $\omega$.  Note also that the pieces of $X_k$ are not necessarily components of $X_k$ since the pieces may overlap in general.
\end{remark}

In the NIFS systems studied in~\cite{RU} (see Definition and Lemma 2.4 of~\cite{RU}, which makes key use of the open set condition - something we do not impose here), it must be the case that $\pi_\Phi(I^\infty) = J$.  We do not necessarily have this in all cases (see Example~\ref{ExProjMapNotOnto}), but we do note that the additional assumption of the Strong Separation Condition would  allow the proof in~\cite{RU} to apply.  In all cases, however, we do have the following result.

\begin{lemma}\label{LemProjImageClosure}
Let $J'(\Phi)=\{z:\phi_\omega(z)=z \textrm{ for some } \omega \textrm{ in some } I^k \}$ where $\Phi$ is a NIFS on $(U, X)$.  Then $J(\Phi) \subseteq \overline{J'(\Phi)}$, and hence $\overline{J(\Phi)} \subseteq \overline{J'(\Phi)}$.  Also,
\[\overline{J(\Phi)}=\overline{\pi_\Phi(I^\infty)},\]
and so, if $\pi_\Phi(I^\infty)$ is compact, then ${J(\Phi)}={\pi_\Phi(I^\infty)}.$
\end{lemma}

We note that in the non-autonomous case, unlike in the autonomous case (see Claim~\ref{ClaimA'}), $J'$ does not necessarily have to be a subset of $J$, or even of $\overline{J}$.  See Example~\ref{ExHNUP}.

\begin{proof}
Let $z \in J$ and $\delta >0$.  Choose $k$ such that $s^k \textrm{diam}_d(X)< \delta$.  Since $ z \in J \subseteq X_k$, there exists $\omega \in I^k$ such that $z \in \varphi_\omega (X)$. Extend $\omega$ to any $\omega^* \in I^\infty$ and note that, as stated in Remark~\ref{RemPiecesContainLimitPts}, $\varphi_\omega(X)$ contains both the fixed point of the contraction $\varphi_\omega$ and the point $\pi_\Phi(\omega^*) \in J$.  Since $\varphi_\omega (X) \subseteq \Delta_d(z, s^k \textrm{diam}_d(X)) \subseteq \Delta_d(z, \delta)$, we conclude $J \subseteq \overline{J'(\Phi)} \cap \overline{\pi_\Phi(I^\infty)}$.  This and the definition of $\pi_\Phi$ yield that $\overline{J} \subseteq \overline{\pi_\Phi(I^\infty)}\subseteq \overline{J}$.

The final statement follows since if $\pi_\Phi(I^\infty)$ is compact, we have $J(\Phi) \subseteq \overline{J(\Phi)} =\overline{\pi_\Phi(I^\infty)}= \pi_\Phi(I^\infty)\subseteq J(\Phi)$.
\end{proof}

In certain examples, it is convenient to change the set $X$ to a more convenient forward invariant compact set.  The following result shows that such a change to $X$, though it may affect $J$ (see Example~\ref{ExJChanges}), will not affect $\overline{J}$, the central object of study for this paper.

\begin{lemma}\label{LemReplaceX2}
Let $\widetilde{X}\neq \emptyset$ be a compact subset of $X$ that is forward invariant under NIFS $\Phi$ on $(U, X)$.  Then, calling $\widetilde{X}_k:= \bigcup_{\omega \in I^k} \varphi_\omega(\widetilde{X})$, we have \[\overline{J(\Phi)}=\overline{\bigcap_{k=1}^\infty X_k}=\overline{\bigcap_{k=1}^\infty \widetilde{X}_k}.\]

Hence, if each $\widetilde{X}_k$ is compact, then $J(\Phi)=\bigcap_{k=1}^\infty X_k=\bigcap_{k=1}^\infty \widetilde{X}_k.$
\end{lemma}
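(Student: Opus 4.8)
The plan is to trap $\bigcap_{k=1}^\infty \widetilde{X}_k$ between $\pi_\Phi(I^\infty)$ on the inside and $J(\Phi)=\bigcap_{k=1}^\infty X_k$ on the outside, and then to collapse the resulting chain of inclusions by taking closures and invoking Lemma~\ref{LemProjImageClosure}, which identifies $\overline{J(\Phi)}$ with $\overline{\pi_\Phi(I^\infty)}$.

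First I would establish the outer inclusion. Since $\widetilde{X}\subseteq X$, monotonicity of images gives $\varphi_\omega(\widetilde X)\subseteq \varphi_\omega(X)=X_\omega$ for every word $\omega\in I^k$, hence $\widetilde X_k\subseteq X_k$ for all $k$ and therefore $\bigcap_k \widetilde X_k\subseteq \bigcap_k X_k = J(\Phi)$. For the inner inclusion I would fix $\omega^*\in I^\infty$ and apply the observation in Remark~\ref{RemProjMap}: because $\widetilde X$ is non-empty, compact, and forward invariant, the sets $\varphi_{\omega^*_1\cdots\omega^*_n}(\widetilde X)$ form a decreasing sequence of non-empty compacta whose intersection equals $\bigcap_n \varphi_{\omega^*_1\cdots\omega^*_n}(X)=\{\pi_\Phi(\omega^*)\}$. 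In particular $\pi_\Phi(\omega^*)\in\varphi_{\omega^*_1\cdots\omega^*_k}(\widetilde X)\subseteq \widetilde X_k$ for every $k$, so $\pi_\Phi(\omega^*)\in\bigcap_k\widetilde X_k$; letting $\omega^*$ vary yields $\pi_\Phi(I^\infty)\subseteq\bigcap_k\widetilde X_k$. Chaining the two inclusions gives $\pi_\Phi(I^\infty)\subseteq\bigcap_k\widetilde X_k\subseteq\bigcap_k X_k=J(\Phi)$, and taking closures, $\overline{\pi_\Phi(I^\infty)}\subseteq\overline{\bigcap_k\widetilde X_k}\subseteq\overline{J(\Phi)}$. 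By Lemma~\ref{LemProjImageClosure} the two outer terms agree, so all three closures coincide; together with the tautology $\overline{J(\Phi)}=\overline{\bigcap_k X_k}$ (valid since $J(\Phi):=\bigcap_k X_k$), this is exactly the displayed equality.

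For the final assertion I would first note that the $\widetilde X_k$ are nested decreasing: forward invariance gives $\varphi_{\omega_1\cdots\omega_{k+1}}(\widetilde X)=\varphi_{\omega_1\cdots\omega_k}\bigl(\varphi^{(k+1)}_{\omega_{k+1}}(\widetilde X)\bigr)\subseteq\varphi_{\omega_1\cdots\omega_k}(\widetilde X)$, whence $\widetilde X_{k+1}\subseteq\widetilde X_k$. If each $\widetilde X_k$ is compact then, being also non-empty, the nested family has non-empty intersection, so $\bigcap_k\widetilde X_k$ is a non-empty compact and in particular closed set; thus $\overline{\bigcap_k\widetilde X_k}=\bigcap_k\widetilde X_k$. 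Feeding this into the inclusion chain above gives $\bigcap_k\widetilde X_k\subseteq J(\Phi)\subseteq\overline{J(\Phi)}=\bigcap_k\widetilde X_k$, forcing $J(\Phi)=\bigcap_k X_k=\bigcap_k\widetilde X_k$.

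The only genuine point requiring care is the inner inclusion: the whole argument rests on $\pi_\Phi(\omega^*)$ lying in every $\widetilde X_k$, which in turn depends on the fact—recorded in Remark~\ref{RemProjMap} but worth restating at the point of use—that $\pi_\Phi$ is unaffected by replacing $X$ with the smaller forward-invariant compact set $\widetilde X$. Everything past that is routine bookkeeping with nested sets and closures.
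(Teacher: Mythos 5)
Your argument is correct: the squeeze $\pi_\Phi(I^\infty)\subseteq\bigcap_k\widetilde{X}_k\subseteq\bigcap_k X_k=J(\Phi)$, combined with Lemma~\ref{LemProjImageClosure} and the closedness of a nested intersection of compacta, is exactly the argument this lemma is designed to admit (the paper itself defers the proof to Lemma~1.2 of~\cite{FalkStankewitz}, whose setup via Remark~\ref{RemProjMap} and Lemma~\ref{LemProjImageClosure} matches your route). No gaps; the one point needing care --- that $\pi_\Phi(\omega^*)$ lies in every $\varphi_{\omega^*_1\cdots\omega^*_k}(\widetilde{X})$ --- you correctly ground in the forward invariance and compactness of $\widetilde{X}$ as recorded in Remark~\ref{RemProjMap}.
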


\begin{proof}
Since, as was noted in Remark~\ref{RemProjMap}, the projection map $\pi_\Phi$ is independent of the choice of non-empty compact forward invariant set $X$, the first result follows immediately from Lemma~\ref{LemProjImageClosure}.

When each $\widetilde{X}_k$ is compact, the second result follows since $J(\Phi) \subseteq \overline{J(\Phi)}=\overline{\bigcap_{k=1}^\infty X_k}=\overline{\bigcap_{k=1}^\infty \widetilde{X}_k}=\bigcap_{k=1}^\infty \widetilde{X}_k \subseteq \bigcap_{k=1}^\infty {X}_k = J(\Phi)$.
\end{proof}

\begin{example}[Projection map $\pi_\Phi:I^\infty \to J$ not onto]\label{ExProjMapNotOnto}
Let $X=[0, 1]$ be the unit interval.  Let $\Phi^{(1)}=\{f_1, f_2, f_3, \dots\}$ where $f_n(z)=\frac{z}3+e_n$ with   $e_n =\frac13-\frac1{3^n}$.  Note that $e_1=0$ and $0<e_n< \frac13$ for all $n \geq 2$.  Let $\Phi^{(k)}=\{f_1\}$ for all $k \geq 2$.

Technically speaking, one should first establish an open set $U \subseteq \C$ (e.g., $\Delta(0,10)$) and corresponding compact subset $X$ (e.g., $\overline{\Delta(0,9)}$) to satisfy the NIFS condition that each function map $U$ into $X$.  And then afterwards use Lemma~\ref{LemReplaceX2} to replace $X$ by the forward invariant interval $[0, 1]$ without altering the limit set $J$.  However, in later examples we forgo such details leaving it for the reader to quickly check that such a procedure can be validly executed.

We now show $\frac13 \in J \setminus \pi_\Phi(I^\infty)$.
Since, for each $n \in \N$, we have $\frac13 \in[e_n, \frac1{3}]=[e_n, \frac1{3^n}+e_n]=f_n\circ f_1^{n-1}(X) \subseteq X_n$, we see $\frac13 \in J$.
However, for each $\omega \in I^\infty$ there must be some $f_n \in \Phi^{(1)}$ such that $\{\pi_\Phi(\omega)\}=\cap_{k=1}^\infty f_n\circ f_1^{k-1}(X)=\cap_{k=1}^\infty [e_n, \frac1{3^k}+e_n]=\{e_n\} \neq \{\frac13 \}$.  Hence $\pi_\Phi(I^\infty)=\{e_n:n \in \N\}$, and so $\pi_\Phi(I^\infty) \neq J =\{e_n:n \in \N\} \cup \{1/3\}$, where the equality follows from Lemma~\ref{LemProjImageClosure}.
\end{example}

\begin{example}[$J$ depends on $X$] \label{ExJChanges}
Let $X=[-1,1]$ and $\widetilde{X}=[0,1]$.  For each $n \in \N$, set $z_n=\frac1{2^n-1}>0$ and $f_n(z)=\frac12 (z-z_n)+z_n$.  Clearly, each of $X$ and $\widetilde{X}$ is forward invariant under each contraction $f_n$.  We consider the (autonomous) system generated where each $\Phi^{(k)}=\{f_n:n \in \N\}$.  Considering $\widetilde{X}_k$ given as in Lemma~\ref{LemReplaceX2}, it is clear that $0 \not \in \widetilde{X}_1$ since, for all $n$, we see $0 \notin [\frac{z_n}{2}, \frac{1+z_n}2]=f_n(\widetilde{X})$.  However, for all $n \in \N$, since the $n$-th iterate $f_n^n(z)=\frac1{2^n} (z-z_n)+z_n$, we see $0 \in [0,f_n^n(1)] = [f_n^n(-1),f_n^n(1)] = f_n^n(X)\subseteq X_n$.
Hence $0 \in \cap_{n =1}^\infty X_n \setminus \cap_{n =1}^\infty \widetilde{X}_n$, showing that $J$ does depend on the choice of forward invariant non-empty compact set $X$ (something which cannot happen in the NIFS systems studied in~\cite{RU} where, as noted,  $J = \pi_\Phi(I^\infty)$ must hold).
\end{example}


\begin{remark}[Invariance Condition]\label{RmkInvariance}
Note that for any $j \geq 1$ and $k \geq 0$, we unpack the relevant definitions (defining each $X_{0}^{(j)}=X$) to see the following invariance condition
\begin{equation}\label{EqInvarCond}
\bigcup_{i \in I^{(j)}} \varphi_i^{(j)}(X_k^{(j+1)})=X_{k+1}^{(j)},
\end{equation}
which is illustrated in Figure~\ref{TablePic} as a way of relating the diagonally adjacent sets $X_{k+1}^{(j)}$ and $X_k^{(j+1)}$.
\end{remark}

\begin{remark}\label{RemInvarCondLimitSets}
Letting $k \to \infty$ in the invariance condition~(\ref{EqInvarCond}) leads one to wonder if we must always have
$\bigcup_{i \in I^{(j)}} \varphi_i^{(j)}(J^{(j+1)})=J^{(j)}$.  While this is not true in general, we do always get the inclusion
\begin{eqnarray*}
\bigcup_{i \in I^{(j)}} \varphi_i^{(j)}(J^{(j+1)})
&=&
\bigcup_{i \in I^{(j)}} \varphi_i^{(j)}(\bigcap_{k=1}^\infty X_k^{(j+1)})
\subseteq \bigcup_{i \in I^{(j)}} \bigcap_{k=1}^\infty \varphi_i^{(j)}( X_k^{(j+1)}) \\
& \subseteq & \bigcap_{k=1}^\infty \bigcup_{i \in I^{(j)}} \varphi_i^{(j)}( X_k^{(j+1)})
=\bigcap_{k=1}^\infty X_{k+1}^{(j)}
=J^{(j)}.
\end{eqnarray*}
Now consider Example~\ref{ExProjMapNotOnto} to see that equality above does not follow.  Since $J^{(2)}=\{0\}$, $\bigcup_{i \in I^{(1)}} \varphi_i^{(1)}(J^{(2)})= \{e_n:n \in \N\} \neq \{e_n:n \in \N\} \cup \{\frac13\}=J^{(1)}$.  Additionally, $\bigcup_{i \in I^{(1)}} \varphi_i^{(1)}\left(\overline{J^{(2)}}\right) \neq \overline{J^{(1)}}$.
\end{remark}

Additional hypotheses, however, lead to the following result.

\begin{lemma}\label{LemInvClosedLimitSets}
Let $\Phi$ be a NIFS on $(U,X)$ and let $j \in \N$.
When $\Phi^{(j)}$ is finite, we have \[\bigcup_{i \in I^{(j)}} \varphi_i^{(j)}\left(\overline{J^{(j+1)}}\right) = \overline{J^{(j)}}.\]  Hence, when $\Phi^{(j)}$ is finite and $J^{(j+1)}$ is compact (e.g., when all $\Phi^{(k)}$, for $k \geq j$, are finite), we see that $\bigcup_{i \in I^{(j)}} \varphi_i^{(j)}\left({J^{(j+1)}}\right) = {J^{(j)}}$.
\end{lemma}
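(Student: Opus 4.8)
The plan is to first establish the set equality $\bigcup_{i \in I^{(j)}} \varphi_i^{(j)}(\overline{J^{(j+1)}}) = \overline{J^{(j)}}$ by a double inclusion, making essential use of the invariance condition~\eqref{EqInvarCond} and the fact that each $\varphi_i^{(j)}$, being continuous on the compact set $X$ (with $\overline{J^{(j+1)}} \subseteq X$), maps compact sets to compact sets and hence commutes with closure in the sense that $\varphi_i^{(j)}(\overline{A}) = \overline{\varphi_i^{(j)}(A)}$ for $A \subseteq X$. For the inclusion $\supseteq$, I would start from $\overline{J^{(j)}} = \overline{\bigcap_{k=1}^\infty X_k^{(j)}}$ and use Lemma~\ref{LemReplaceX2} (or directly the definitions) to relate this to the sets $X_{k+1}^{(j)} = \bigcup_{i \in I^{(j)}} \varphi_i^{(j)}(X_k^{(j+1)})$; since $\Phi^{(j)}$ is finite, the finite union commutes with the nested intersection over $k$, giving $\bigcap_{k} X_{k+1}^{(j)} = \bigcup_{i} \varphi_i^{(j)}(\bigcap_k X_k^{(j+1)}) = \bigcup_i \varphi_i^{(j)}(J^{(j+1)})$ provided the relevant sets are compact — and in general one must pass to closures to handle the non-compact case, which is exactly where Lemma~\ref{LemReplaceX2} and Lemma~\ref{LemProjImageClosure} (realizing $\overline{J}$ as $\overline{\pi_\Phi(I^\infty)}$) do the work.

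More carefully, for $\subseteq$: any point of $\overline{J^{(j)}}$ is, by Lemma~\ref{LemProjImageClosure}, a limit of points $\pi_{\Phi}(\omega^{*,m})$ with $\omega^{*,m} \in I^\infty$ for the $j$-th shifted system; writing $\omega^{*,m} = i_m \tau^{*,m}$ with $i_m \in I^{(j)}$ and $\tau^{*,m}$ a word for the $(j+1)$-shifted system, one has $\pi_\Phi(\omega^{*,m}) = \varphi_{i_m}^{(j)}(\pi_\Phi(\tau^{*,m}))$ where $\pi_\Phi(\tau^{*,m}) \in J^{(j+1)} \subseteq \overline{J^{(j+1)}}$. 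Since $I^{(j)}$ is finite, after passing to a subsequence we may assume $i_m = i$ is constant; then by continuity of $\varphi_i^{(j)}$ and compactness of $\overline{J^{(j+1)}}$, a further subsequence of $\pi_\Phi(\tau^{*,m})$ converges to some $y \in \overline{J^{(j+1)}}$, and the limit point equals $\varphi_i^{(j)}(y) \in \bigcup_{i} \varphi_i^{(j)}(\overline{J^{(j+1)}})$. For $\supseteq$: given $y \in \overline{J^{(j+1)}}$ and $i \in I^{(j)}$, approximate $y$ by $\pi_\Phi(\tau^{*,m}) \in J^{(j+1)}$; then $\varphi_i^{(j)}(\pi_\Phi(\tau^{*,m})) = \pi_\Phi(i\tau^{*,m}) \in J^{(j)}$, and by continuity these converge to $\varphi_i^{(j)}(y)$, so $\varphi_i^{(j)}(y) \in \overline{J^{(j)}}$.

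Finally, the ``Hence'' clause: if $\Phi^{(j)}$ is finite and $J^{(j+1)}$ is compact, then $\overline{J^{(j+1)}} = J^{(j+1)}$, so $\bigcup_{i \in I^{(j)}} \varphi_i^{(j)}(J^{(j+1)}) = \overline{J^{(j)}}$; but the left-hand side is a finite union of continuous images of a compact set, hence compact, hence closed, so $\overline{J^{(j)}} = \bigcup_{i \in I^{(j)}} \varphi_i^{(j)}(J^{(j+1)})$ is itself compact and equals $J^{(j)}$ because a closed subset of $X$ equal to its own closure satisfies $J^{(j)} \subseteq \overline{J^{(j)}} = \bigcup_i \varphi_i^{(j)}(J^{(j+1)})$ while the reverse inclusion $\bigcup_i \varphi_i^{(j)}(J^{(j+1)}) \subseteq J^{(j)}$ follows from~\eqref{EqInvarCond} and the observation that the left side is contained in every $X_{k+1}^{(j)}$. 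The parenthetical remark that finiteness of all $\Phi^{(k)}$ for $k \geq j$ forces $J^{(j+1)}$ compact is immediate since then each $X_k^{(j+1)}$ is a finite union of compact sets, hence compact, and $J^{(j+1)}$ is a nested intersection of such.

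The main obstacle I expect is bookkeeping the non-compact case correctly: one cannot naively interchange $\bigcap_k$ with $\varphi_i^{(j)}$ or with the (finite) union unless one is careful, and the cleanest route is to route everything through the projection-map description $\overline{J} = \overline{\pi_\Phi(I^\infty)}$ from Lemma~\ref{LemProjImageClosure} together with the compactness of $\overline{J^{(j+1)}} \subseteq X$, as sketched above, rather than trying to manipulate the intersections $\bigcap_k X_k$ directly.
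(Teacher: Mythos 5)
The paper does not prove this lemma itself; it imports it verbatim as Lemma~1.3 of~\cite{FalkStankewitz}, so there is no in-paper argument to compare against. Your proof is correct and complete: routing both inclusions through the projection-map description $\overline{J}=\overline{\pi_\Phi(I^\infty)}$ of Lemma~\ref{LemProjImageClosure}, using finiteness of $I^{(j)}$ plus compactness of $\overline{J^{(j+1)}}\subseteq X$ to extract convergent subsequences for one containment, and deducing the ``Hence'' clause from closedness of the finite union of compact images together with $\bigcup_i \varphi_i^{(j)}(J^{(j+1)})\subseteq J^{(j)}$ via~\eqref{EqInvarCond}, is exactly the kind of argument the cited reference relies on. The only blemish is cosmetic: the $\subseteq$/$\supseteq$ labels in your two paragraphs are used inconsistently relative to the displayed equality, though each paragraph correctly describes the containment it actually establishes.
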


\begin{proof}
To prove the first statement it suffices to consider $j=1$.  Letting $I_1^\infty = \prod_{k=1}^\infty I^{(k)}$ and $I_2^\infty = \prod_{k=2}^\infty I^{(k)}$, we define the respective projection maps $\pi_1:I_1^\infty \to J^{(1)}$ and $\pi_2:I_2^\infty \to J^{(2)}$. We first note that
\[\bigcup_{i \in I^{(1)}} \varphi_i^{(1)}(\pi_2(I_2^\infty))=\pi_1(I_1^\infty),\]
since
\begin{eqnarray*}
  \bigcup_{i \in I^{(1)}} \varphi_i^{(1)}(\pi_2(I_2^\infty)) &=& \bigcup_{i \in I^{(1)}} \varphi_i^{(1)}(\bigcup_{\omega \in I_2^\infty} \{\pi_2(\omega)\})
  =\bigcup_{i \in I^{(1)}} \varphi_i^{(1)}(\bigcup_{\omega \in I_2^\infty} \bigcap_{n=2}^\infty \varphi_{\omega_2 \cdots \omega_n}(X)) \\
  &=& \bigcup_{i \in I^{(1)}}\bigcup_{\omega \in I_2^\infty} \varphi_i^{(1)}(\bigcap_{n=2}^\infty \varphi_{\omega_2 \cdots \omega_n}(X))\\
  &=&\bigcup_{i \in I^{(1)}}\bigcup_{\omega \in I_2^\infty}\bigcap_{n=2}^\infty \varphi_i^{(1)}(\varphi_{\omega_2 \cdots \omega_n}(X)) \\
  &=& \bigcup_{i \in I^{(1)}}\bigcup_{\omega \in I_2^\infty} \bigcap_{n=1}^\infty \varphi_{i \cdot \omega_2 \cdots \omega_n}(X)
=\bigcup_{\omega^* \in I_1^\infty} \{\pi_1(\omega^*)\}
=\pi_1(I_1^\infty),
\end{eqnarray*}
where Lemma~\ref{LemImageDecrCompact} was used with regard to $\varphi_i^{(1)}$ and the decreasing compact sets $\varphi_{\omega_2 \cdots \omega_n}(X)$.

Then, using Lemma~\ref{LemProjImageClosure}, we see
\begin{eqnarray*}
\overline{J^{(1)}}&=&\overline{ \pi_1(I_1^\infty)  }=\overline{\bigcup_{i \in I^{(1)}} \varphi_i^{(1)}(\pi_2(I_2^\infty))}=\bigcup_{i \in I^{(1)}}  \overline{\varphi_i^{(1)}(\pi_2(I_2^\infty))}\\
&=&
\bigcup_{i \in I^{(1)}} \varphi_i^{(1)}\left(\overline{\pi_2(I_2^\infty)}\right)
=\bigcup_{i \in I^{(1)}} \varphi_i^{(1)}\left(\overline{J^{(2)}}\right),
\end{eqnarray*}
where we used the facts that the union is finite, each
$\varphi_i^{(1)}$ is continuous, and the set $\overline{\pi_2(I_2^\infty)}$ is compact.

The final statement of the lemma follows since, if $\Phi^{(j)}$ is finite and $J^{(j+1)}$ is compact, then
${J^{(j)}} \subseteq \overline{J^{(j)}}=\bigcup_{i \in I^{(j)}} \varphi_i^{(j)}\left(\overline{J^{(j+1)}}\right)
=\bigcup_{i \in I^{(j)}} \varphi_i^{(j)}\left({J^{(j+1)}}\right) \subseteq
{J^{(j)}}$, where the last inclusion is justified by Remark~\ref{RemInvarCondLimitSets}.
\end{proof}

\begin{example}\label{ExCantor}
Let $X=[0, 1]$ denote the closed unit interval.  Consider a sequence $(a_j)$ such that each $0< a_j \leq 1/3$, and define maps $\varphi_1^{(j)}(z)=a_j z$ and $\varphi_2^{(j)}(z)=a_j (z-1)+1$.  Then the families of maps $\Phi^{(j)}=\{\varphi_1^{(j)},\varphi_2^{(j)}\}$ define an NIFS.  See Figure~\ref{TablePic}.

\begin{figure}
\centering
  \includegraphics[width=5.4in]{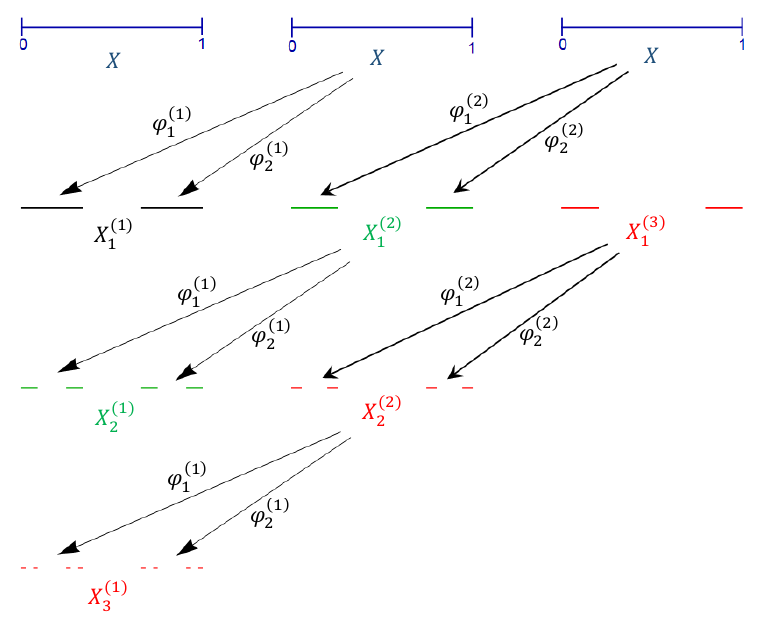}
  \caption{Table illustrating Example~\ref{ExCantor} with $a_1=\frac13, a_2=\frac14$, and $a_3=\frac15$.  Note that sets in each column decrease down to the corresponding limit set, i.e., for each $j \in \N$ we have $\cap_{k=1}^\infty X_k^{(j)}=J^{(j)}$.  Also, note that diagonally adjacent sets $X_{k+1}^{(j)}$ and $X_k^{(j+1)}$ are related by the invariance condition~(\ref{EqInvarCond}) in Remark~\ref{RmkInvariance}. }\label{TablePic}
\end{figure}

\end{example}

\begin{remark}[Combining Stages]\label{RemCombStages}
It will be useful later to analyze a limit set of some NIFS $\Phi$ by first combining stages.  Here we present what this means, in particular, showing that this does not alter the limit set.  First, for families of maps $\Gamma_1, \Gamma_2, \dots, \Gamma_n$, we define $\Gamma_1 \circ \Gamma_2 \circ \cdots \circ \Gamma_n$ to be $\{f_1 \circ f_2 \circ \cdots \circ f_n:f_i \in \Gamma_i\}$.

Given an NIFS $\Phi^{(1)},\Phi^{(2)},\Phi^{(3)}, \dots$ on some $(U,X)$, we can create a new NIFS by combining finite strings of stages as follows.  Consider any strictly increasing sequence $(k_n)_{n=1}^\infty$ of positive integers and define a new NIFS $\widetilde{\Phi}$ by $\widetilde{\Phi}^{(1)}=\Phi^{(1)} \circ \cdots \circ \Phi^{(k_1)}$, $\widetilde{\Phi}^{(2)}=\Phi^{(k_1+1)} \circ \cdots \circ \Phi^{(k_2)}$, and, in general for $n>1$, $\widetilde{\Phi}^{(n)}=\Phi^{(k_{n-1}+1)} \circ \cdots \circ \Phi^{(k_n)}$.

Notice that $\widetilde{\Phi}$ inherits all the defining properties of an NIFS from $\Phi$.  Furthermore, $J(\widetilde{\Phi})=\bigcap_{n=1}^\infty X_{k_n}= \bigcap_{k=1}^\infty X_{k}= J(\Phi)$, since the sets $X_k$ are decreasing.
\end{remark}

\section{Review of Autonomous Attractors}\label{SecAutAttractMainThm}

In this section we review known results for autonomous attractors and relate them to the main results for non-autonomous attractors stated in Theorems~\ref{Thm2Main} and~\ref{ThmAnalNIFS}.

The system $\Phi$ in Definition~\ref{defNIFS} is called \emph{autonomous} (and thus just called an IFS) if $I^{(j)}$ and $\Phi^{(j)}$ are independent of $j$, i.e., each $\Phi^{(j)}=\{g_i:i \in I\}$ for some index set $I$.  In such an instance we use the notation $A$ for the attractor instead of $J$ in order to give a notational reminder that we are in a very special (and previously well-studied) case.  For such an autonomous system, we let $G=\langle g_i:i \in I
\rangle$ denote the set of all finite compositions of
generating maps $\{g_i:i \in I\}$, and, following~\cite{RS4}, simply say $G=\langle g_i:i \in I \rangle$ is an IFS on $(U,X)$.

\begin{claim}\label{ClaimA'}
When $\Phi$ is autonomous,
the attractor set $A=J$ given in Definition~\ref{LimitSetdef} satisfies $A \supseteq A'$ and $\overline{A}=\overline{A'}$, the closure of $A'$ in the Euclidean topology (equivalently given by the metric $d$), where
$A'=A'(G):=\{z:\textrm{there exists } g
\in G \textrm{ such that } g(z)=z
\}$ is the set of (attracting) fixed points of $G$.
\end{claim}

Note that in~\cite{RS4} the attractor set was \textit{defined} to be $\overline{A'}$ and not defined in terms of $X_k$ as in Definition~\ref{LimitSetdef}.  This claim, however, shows that the closures of the sets given by the two definitions yield the same set.

\begin{proof}
Let $z \in A'$.  Since the system is autonomous,  there exist some $k \in \N$ and $\omega \in I^k$ such that $\phi_\omega(z)=z$.  Clearly then for each $n$ we see that $z \in \phi_\omega^n(X) \in X_{kn}$, where $\phi_\omega^n$ denotes the $n$th iterate of $\phi_\omega$ (note that the autonomous condition is used here).  Hence $z \in \cap_{n=1}^\infty X_{kn} =\cap_{k=1}^\infty X_{k}=J=A$.  Thus $A' \subseteq A$, and so $\overline{A'} \subseteq \overline{A}$.

The reverse inclusion follows from Lemma~\ref{LemProjImageClosure}.
\end{proof}

If each $\Phi^{(j)}=\{g_1, \dots, g_N\}$, a situation we call the \emph{finite autonomous} case, then the attractor $A$ is the unique non-empty compact subset of $U$ that has the \textit{self-similarity} property given by
\begin{equation}\label{invariance}
A = \bigcup_{i=1}^N g_i(A)
\end{equation}
(see~\cite{Hu}, p.~724).  We note that in this finite autonomous case, the sets $X_k^{(j)}$, and $J^{(j)}$ are all independent of $j$ (in Example~\ref{ExCantor} illustrated in Figure~\ref{TablePic} this would amount to sets across rows being identical because $a_1=a_2=a_3=\dots$).  Furthermore, the  invariance shown in Remark~\ref{RmkInvariance} then becomes $\cup_{i=1}^N g_i(X_k)=X_{k+1}$, which by taking the limit as $k \to \infty$ in a suitable space produces~(\ref{invariance}) (see~\cite{Hu} or apply Lemma~\ref{LemInvClosedLimitSets}).

\begin{remark}
We also point out that in~\cite{MU1,MU2} the
limit set $J$ of a \textit{conformal} IFS is defined a bit differently, but with a clear connection to our definition. See~\cite{MU1,MU2} for a
discussion on the Hausdorff dimension, packing dimension, and other
properties of limit sets of their conformal IFS's.
\end{remark}


In~\cite{RS3} certain \emph{autonomous conformal} attractor sets are shown to be
uniformly perfect, when the generating maps are M\"obius.
Then in~\cite{RS4} a collection of results regarding uniform perfectness are given for \emph{autonomous analytic} attractor sets.  The motivation for the current paper is to explore to what degree, if any, these results generalize to the non-autonomous case.  Hence we first state the major results from~\cite{RS4}.

\begin{othertheorem}[Corollary 1.1 in~\cite{RS4}]\label{cor3}
Let $G=\langle g_i:i \in I \rangle$ be an analytic IFS on $(U,X)$ such
that there exists $\eta>0$ where $|g_i'| \geq \eta$ on $A$ for all $i \in I$.  If $A$  has infinitely many points,
then $A$ is uniformly perfect.
\end{othertheorem}

\begin{othertheorem}[Corollary 1.2 in~\cite{RS4}]\label{main}
Let $G=\langle g_i:i \in I \rangle$ be a conformal IFS on $(U,X)$ such
that there exist $\eta>0$ where $|g_i'| \geq \eta$ on $A$ for all $i \in I$.
If $A$ contains more than one point, then $A$ is
uniformly perfect.
\end{othertheorem}

\begin{othertheorem}[Corollary 1.3 in~\cite{RS4}]\label{cor}
Let $G=\langle g_1, \dots, g_N \rangle$ be a conformal IFS on $(U,X)$.
If $A$ contains more than one point, then $A$ is uniformly perfect.
\end{othertheorem}

The proofs of Theorems~\ref{cor3}-\ref{cor} in~\cite{RS4}, which consider only autonomous systems, heavily rely on the facts (i) $A' \subseteq A$, and (ii) $A$ is forward invariant under $G$, i.e., for every $a \in A$ and $g \in G$ we have $g(a) \in A$ (Lemma 2.2 in~\cite{RS4}).  The main complicating features of the non-autonomous systems we wish to consider in this paper are that these properties do not hold or generalize in a way that allows for the techniques in~\cite{RS4} to be easily adapted to such more general systems (see Example~\ref{ExHNUP} and Remark~\ref{RemA'NotInA}).  Here, however, we do prove Theorem~\ref{Thm2Main} regarding conformal NIFS's and Theorem~\ref{ThmAnalNIFS} regarding analytic NIFS's.

Section~\ref{SecExamples} presents some examples to demonstrate why the possible generalizations of Theorems~\ref{cor3}-\ref{cor} do not hold for general NIFS's, in particular, showing that both (i) and (ii) can fail.

\section{Definitions and basic facts}\label{SectDefsAndFacts}

The main object of interest to this paper is the \textit{analytic} NIFS.  This allows us, via the next result used similarly in~\cite{RS4}, to employ the hyperbolic metric in the definition of NIFS.  In particular, any sequence $\Phi^{(1)},\Phi^{(2)},\Phi^{(3)}, \dots$, such that each $\Phi^{(j)}$ is a collection of non-constant \textit{complex analytic} functions $(\varphi_i^{(j)}:U \to X)_{i \in I^{(j)}}$, where each
function maps the non-empty open connected set $U
\subset \C$ into a compact set $X \subset U$,  will automatically be uniformly contracting with respect to the hyperbolic metric on $U$.
Note that $U \subseteq \C$ must support a hyperbolic metric since $U$ cannot be the plane or punctured plane else the image of $U$ under a non-constant analytic map would have to be dense in $\C$.

\begin{lemma}[Lemma 2.1 of~\cite{RS4}]\label{hypmetric}
If the analytic function $\varphi$ maps an open connected set $U \subset
\C$ into a compact set $X \subset U$, then there exists $0<s<1$,
which depends on $U$ and $X$ only, such that $d(\varphi(z),\varphi(w))\leq s
d(z,w)$ for all $z, w \in X$ where $d$ is the
hyperbolic metric defined on $U$.
\end{lemma}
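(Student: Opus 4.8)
The plan is to deduce this from the Schwarz--Pick lemma together with strict monotonicity of the hyperbolic metric, by inserting an intermediate domain between $\varphi(U)$ and $U$. Since $X$ is compact and contained in the open set $U$, we have $r:=\mathrm{dist}(X,\C\setminus U)>0$; put $V:=\{z:\mathrm{dist}(z,X)<r/2\}$, an open set with $X\subset V$ and $\overline V$ a compact subset of $U$ (if $w\notin U$ then $\mathrm{dist}(w,X)\ge r>r/2$). Because $\varphi(U)$ is connected, $\varphi(U)\subseteq X\subseteq V$ forces $\varphi(U)$ into a single component $V_0$ of $V$; note that $V_0$ depends only on $U$ and $X$, not on $\varphi$. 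Since $U$ carries a hyperbolic metric it is not $\C$ and hence not compact, so $U\neq\overline V$, and as $U$ is open and connected in $\C$ while $\overline V$ is closed, $U\setminus\overline V\neq\emptyset$; in particular $V_0\subsetneq U$. Also $V_0$ is bounded, hence a genuine hyperbolic domain.

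Write $\rho_W$ for the density of the hyperbolic metric of a hyperbolic open set $W$. First, viewing $\varphi$ as a holomorphic map $U\to V_0$, the Schwarz--Pick lemma (lift to universal covers and apply the classical case on $\D$) gives $d_{V_0}(\varphi(z),\varphi(w))\le d_U(z,w)$ for all $z,w\in U$. Second, I claim
$$s:=\sup_{z\in V_0}\frac{\rho_U(z)}{\rho_{V_0}(z)}<1.$$
Indeed, this ratio is a continuous function on $V_0$, everywhere $\le 1$ by domain monotonicity and everywhere $<1$ by strict monotonicity of the hyperbolic metric under the proper inclusion $V_0\subsetneq U$; moreover, as $z\to\partial V_0\subseteq\overline V\subset U$ we have $\rho_{V_0}(z)\to\infty$ while $\rho_U$ is bounded on the compact set $\overline V$, so the ratio tends to $0$ near $\partial V_0$. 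Hence the supremum is attained on a compact subset of $V_0$ and is $<1$; and it depends only on $U$ and $V_0$, hence only on $U$ and $X$.

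To finish, for any path $\gamma$ lying in $V_0$ and joining points $a,b\in V_0$ one has $\int_\gamma\rho_U\,|dz|\le s\int_\gamma\rho_{V_0}\,|dz|$; taking the infimum over such $\gamma$ (and noting every path in $V_0$ is a path in $U$) yields $d_U(a,b)\le s\,d_{V_0}(a,b)$. Applying this with $a=\varphi(z)$, $b=\varphi(w)$, which lie in $\varphi(U)\subseteq V_0$, and combining with Step~1, we get for all $z,w\in X$
$$d_U(\varphi(z),\varphi(w))\le s\,d_{V_0}(\varphi(z),\varphi(w))\le s\,d_U(z,w),$$
which is the assertion.

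The only non-elementary ingredient is the strict monotonicity of the hyperbolic metric under proper inclusion of hyperbolic domains ($V_0\subsetneq U\Rightarrow\rho_{V_0}(z)>\rho_U(z)$ on $V_0$), which is standard (it follows from the strong Schwarz lemma applied to the universal covering maps) and is the step I expect to be the crux; everything else is bookkeeping around Schwarz--Pick and the compactness of $X$ in $U$. A minor but essential point is that $V$, and hence $s$, is chosen purely in terms of $\mathrm{dist}(X,\C\setminus U)$ at the outset, so it is independent of $\varphi$ --- this is exactly the uniformity needed to conclude that the whole NIFS is uniformly contracting on $(X,d)$.
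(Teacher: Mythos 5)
The paper itself gives no proof of this lemma (it is imported verbatim from Lemma~2.1 of~\cite{RS4}), and your Schwarz--Pick plus strict-monotonicity-of-the-hyperbolic-density argument is the standard proof of it; almost all of it is correct, including the key points (the ratio $\rho_U/\rho_{V_0}$ is $<1$ pointwise by strict monotonicity, tends to $0$ at $\partial V_0$ because $\rho_{V_0}\to\infty$ there while $\rho_U$ is bounded on the compact set $\overline V\subset U$, and the pointwise bound integrates along paths in $V_0$ to give $d_U\le s\,d_{V_0}$ there). However, one assertion is false, and it sits exactly where the uniformity in $\varphi$ --- the entire content of the lemma --- is claimed: the component $V_0$ of $V$ containing $\varphi(U)$ does \emph{not} depend only on $U$ and $X$. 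If $X$ is disconnected and $r=\mathrm{dist}(X,\C\setminus U)$ is small compared with the gaps between pieces of $X$, then $V$ is disconnected and different maps land in different components: e.g.\ $U=\Delta(0,6)$, $X=\overline{\Delta}(0,\tfrac1{10})\cup\overline{\Delta}(5,\tfrac1{10})$ gives $r=0.9$ and $V=\Delta(0,0.55)\cup\Delta(5,0.55)$, and $\varphi_1(z)=z/100$, $\varphi_2(z)=5+z/100$ map $U$ into different components. As written, your $s$ therefore depends on $\varphi$ through $V_0$, which proves contraction for each $\varphi$ separately but not the uniform constant the lemma (and the NIFS application) requires.

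The repair is short. Every component of $V=\{z:\mathrm{dist}(z,X)<r/2\}$ contains a point of $X$ (the segment from $z\in V$ to a nearest point of $X$ stays in $V$), so the components form a disjoint open cover of the compact set $X$ and hence are finite in number, say $V_1,\dots,V_m$; each $V_i$ is a bounded domain properly contained in $U$, so your argument gives $s_i:=\sup_{V_i}\rho_U/\rho_{V_i}<1$, and $s:=\max_i s_i<1$ depends only on $U$ and $X$ and works for every $\varphi$. (Alternatively, enlarge $V$ at the outset to a \emph{connected} open set compactly contained in the connected set $U$ by adjoining thickened arcs joining these finitely many components; then $V_0=V$ genuinely is independent of $\varphi$.) With that two-line fix the proof is complete and correct.
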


\begin{remark}\label{RemConnX}
Let $\Phi$ be an analytic NIFS on $(U,X)$.  Note that, for each $x \in X$, the hyperbolic disk $\Delta_d(x, 2\cdot \textrm{diam}(X)) \subset U$ contains $X$ and is connected (being the continuous image of a connected hyperbolic disk in $\Delta(0,1)$).  Hence, $\widetilde{X}=\overline{\bigcup_{x \in X} \Delta_d(x,2\cdot \textrm{diam}(X))}$ is connected (and compact).  Further, since $X$ is forward invariant under $\Phi$, then so is $\widetilde{X}$ since analytic maps cannot increase hyperbolic distances.  We note then that Lemma~\ref{LemReplaceX2} (with the roles of $X$ and $\widetilde{X}$ reversed) allows us to replace $X$ by the connected $\widetilde{X}$ without altering $\overline{J}$.
\end{remark}

We call a doubly connected domain $A$ in $\C$ that can be conformally mapped onto a true (round) annulus $\mathrm{Ann}(w;r,R)=\{z:r < |z-w| < R\}$, for some $0<r<R$, a \emph{conformal annulus} with the \emph{modulus} of $A$ given by $\mathrm{mod~}A=\log(R/r)$, noting that $R/r$ is uniquely determined by $A$ (see, e.g., the version of the Riemann mapping theorem for multiply connected domains in~\cite{A}).

\begin{definition} \label{sepann}
A conformal annulus $A$ is said to
\emph{separate} a set $F \subset \C$ if $F \cap A = \emptyset$ and $F$ intersects both components of $\C \setminus
A$.
\end{definition}

\begin{definition} \label{updef}
A compact subset $F \subset \C$ with two or more points is \emph{uniformly perfect} if
there exists a uniform upper bound on the modulus of each conformal annulus which separates $F$.
\end{definition}

\begin{remark}\label{RemTrueAnn}
Because of the following well-known lemma (see, e.g., Theorem~2.1 of~\cite{McM}), we can equivalently characterize uniformly perfect sets in terms of only true annuli:  A compact subset $F \subset \C$ with two or more points is \emph{uniformly perfect} if there exists a uniform upper bound on the modulus of each \textit{true} annulus (centered at a point in $F$, if we choose) which separates $F$.
\end{remark}

\begin{lemma}\label{LemRoundAnn}
Any conformal annulus $A \subset \C$ of sufficiently large modulus contains an essential true annulus $B$ (i.e., $B$ separates the boundary of $A$) with $\mathrm{mod~}A=\mathrm{mod~}B + O(1)$.  Since, for any $R>3r$ and any $w' \in \overline{\Delta(w,r)}$, the true annulus $\mathrm{Ann}(w';2r, R-r)$ is an essential annulus of $\mathrm{Ann}(w;r, R)$, we may choose $B$ to be centered at any given point in the bounded component of $\C \setminus A$.
\end{lemma}

\begin{remark}\label{RmkAnnInf}
For the case when the conformal annulus $A \subset \CC$ contains infinity the above lemma can be modified to read as: Any conformal annulus $A \subset \CC$ of sufficiently large modulus contains an essential true annulus $B$ (i.e., $B$ separates the boundary of $A$) with $\mathrm{mod~}A=2\mathrm{mod~}B + O(1)$.  To see this note that $A$ contains two disjoint essential conformal annuli $A'$ and $A''$ each with half the modulus of $A$, at most one of which, say, $A''$ can contain infinity.  This can be observed by considering mapping $A$ conformally onto $\mathrm{Ann}(0;1,R)$, and then taking the preimages of $\mathrm{Ann}(0;1,\sqrt{R})$
and $\mathrm{Ann}(0;\sqrt{R},R)$ inside of $A$.  By applying Lemma~\ref{LemRoundAnn} to $A'$ we can obtain our desired result.
\end{remark}

Recall that a compact set $E \subset \C$ is called \textit{hereditarily non uniformly perfect} (HNUP) if no subset of $E$ is uniformly perfect.  Often a set is shown to be HNUP by showing it satisfies the following stronger property of \emph{pointwise thinness}.  This is done in several examples in~\cite{SSS}, and will be done in Example~\ref{ExHNUP}.  Also, certain non-autonomous Julia sets in~\cite{CSS} and in Theorem~\ref{ThmDichotomy} are shown to be HNUP this way (where it is worth noting that the Julia sets constructed are limit sets of conformal NIFS's).


\begin{definition}\label{DefPointThin}A set $E \subset \C$ is \textit{pointwise thin at} $z \in E$ if there exists a sequence of conformal annuli $A_n$ each of which separates $E$, has $z$ in the bounded component of its complement, and such that $\textrm{mod }A_n \to +\infty$ while the Euclidean diameter of $A_n$ tends to zero.  A set $E \subset \C$ is called \emph{pointwise thin} when it is pointwise thin at each of its points.
\end{definition}

Note that any pointwise thin compact set is HNUP since none of its points can lie in a uniformly perfect subset.
Also note that if $E$ is pointwise thin, then $\overline{E}$ is pointwise thin at each point of $E$ (but not necessarily pointwise thin at each point of $\overline{E}$ as the next example illustrates).

\begin{example}[Closure of pointwise thin is not pointwise thin]\label{ExClosurePTnotPT}
The set $E=\{2^{-n}:n \in \N\}$ is trivially pointwise thin, but its closure $\overline{E}$ is not pointwise thin at 0 since the reader can check that the modulus of any round annulus separating $\overline{E}$ and containing 0 must be bounded by $\log 2$.
\end{example}

\begin{lemma}\label{LemmaAnn2}
Suppose $A=\mathrm{Ann}(z;r, R)$, for some $z\in \C$ and $0<r<R$, is a true annulus separating $J$, where $J =\cap_{k=1}^\infty X_k$ is the attractor of some NIFS $\Phi$.  Fix $0<\delta < \frac{R-r}2$.  Then the annulus $B=\mathrm{Ann}(z;r+\delta, R-\delta) \subset A$ separates some $X_k$.  Hence, given any $0<\epsilon < \mathrm{mod~}A$, we can choose $\delta>0$ such that $\mathrm{mod~} B = \log(\frac{R-\delta}{r+\delta})= \log (\frac{R}{r}) - \epsilon=\mathrm{mod~} A-\epsilon$, where $B$ separates some $X_k$.
\end{lemma}

\begin{proof}
Since $A$ separates $J$ and $B$ is an essential subannulus of $A$, both components of $\C \setminus B$ must meet $J$, and therefore must meet each $X_k \supseteq J$. We complete the proof by showing that $B \cap X_k = \emptyset$ for some $k$.  Suppose not.
Now fix $k$ and choose $z_k \in X_k \cap B$.  Hence there exists $\omega \in I^k$ such that $z_k \in  \varphi_\omega(X)$.  Since $\mathrm{diam}_d(\varphi_\omega(X))\leq s^k\mathrm{diam}_d(X)$ (see Remark~\ref{RemPiecesContainLimitPts}), we have that $\varphi_\omega(X) \subseteq \Delta_d(z_k, s^k\mathrm{diam}_d(X)) \subset A$ for $k$ sufficiently large (since $z_k \in \overline{B} \subset A$ and $d$ generates the Euclidean topology on $X$).  Since $\varphi_\omega(X)$ must contain a point of $J$ (see Remark~\ref{RemPiecesContainLimitPts}), we see that $A \cap J \neq \emptyset$ and thus $A$ does not separate $J$, which is a contradiction.
\end{proof}

\begin{lemma}\label{LemRbound}
Suppose $A= \mathrm{Ann}(z;r,R)$, for some $z\in \C$ and $0<r<R$, separates $E \subseteq X \subset \C$ where $\mathrm{diam}(X)<\infty$ and $R \geq 2 \cdot \mathrm{diam}(X)$.  Then $\frac{R}{r} \leq 2$.
\end{lemma}

\begin{proof} Since $A$ separates $E$, there exist $x_1, x_2 \in E$ with $|x_1-z|\geq R$ and $|x_2-z|\leq r$.  Hence $2 \cdot \mathrm{diam}(X) - r \leq R-r \leq |x_1-x_2| \leq \mathrm{diam}(E) \leq \mathrm{diam}(X)$, which gives that $\mathrm{diam}(X) \leq r$.  Again using that $R-r \leq \mathrm{diam}(X)$, we see that $\frac{R-r}{r} \leq \frac{\mathrm{diam}(X)}{r} \leq 1$, which gives $\frac{R}{r}  \leq 2$ as desired.
\end{proof}

The following is a result that seems to be well understood by many but, since a reference could not be found, we provide a proof here.

\begin{proposition}\label{PropUPImage}
Let $f:U \to \C$ be non-constant and analytic on open connected $U \subset \C$.  Suppose that $E \subset U$ is uniformly perfect.  Then $f(E)$ is uniformly perfect.
\end{proposition}

This result follows from the fact that \textit{locally} non-constant analytic maps are either conformal or behave like $z \mapsto z^k$ for some $k \in \N$, which can distort the modulus of an annulus by at most a factor of $k$.

\begin{proof}
The local behavior of non-constant analytic maps clearly implies that, since $E$ is perfect, so is $f(E)$.
We now suppose towards a contradiction that $f(E)$ is not uniformly perfect.  Hence there exists true annuli $A_n=\mathrm{Ann}(w_n;r_n, R_n)$ which separate $f(E)$ with $R_n/r_n \to \infty$.

By Lemma~\ref{LemRoundAnn}, we may
assume each $w_n \in f(E)$.  Since $f(E)$ is perfect, it follows that $R_n \to 0$ (see, e.g., Lemma 2.7 of~\cite{RS4}).

By compactness of both $f(E)$ and $E$, and passing to a subsequence if necessary, we may assume there exists $w_0 \in f(E)$ such that  $w_n \to w_0$ and $z_0, z_n \in E$ such that $z_n \to z_0$ with each $f(z_n)=w_n$.

Suppose $f'(z_0) \neq 0$.  Thus there exists a local branch $h$ of $f^{-1}$ defined on some neighborhood of $w_0$.  Hence, the conformal annuli $h(A_n)$, for large $n$, must then separate $E$, which is a contradiction since $E$ is uniformly perfect and $\mathrm{mod~}h(A_n)=\mathrm{mod~}A_n \to \infty$.

Now suppose $f'(z_0) = 0$, and choose $k$ such that $f$ maps $z_0$ to $w_0$ with multiplicity $k>1$.  By pre- and post- composing with translations, we may assume $z_0=w_0=0$, and so there exists a conformal map $g$ defined on a neighborhood of $0$ such that $gfg^{-1}(z)=z^k$ (see, e.g., Theorem 6.10.1 of~\cite{Be}).  It suffices to consider two cases:  Case(i) Each $A_n$ surrounds $w_0=0$, and Case (ii) No $A_n$ surrounds $w_0=0$.

\textbf{Case (i):}  From each conformal annulus $g(A_n)$ of large modulus (and so for all large $n$), we apply Lemma~\ref{LemRoundAnn} to extract an essential true annulus $B_n=\mathrm{Ann}(0;s_n,S_n)\subseteq g(A_n)$ of modulus $\mathrm{mod~}B_n=\mathrm{mod~}A_n -K$, for some fixed $K>0$.  Since $A_n'=\mathrm{Ann}(0;s_n^{1/k},S_n^{1/k})$ maps by $z \mapsto z^k$ onto $B_n \subseteq g(A_n)$, we must have that each conformal annulus $g^{-1}(A_n')$ surrounds $z_0=0$ and $\mathrm{mod~}g^{-1}(A_n')=\mathrm{mod~}(A_n') =\frac{1}{k} \mathrm{mod~}B_n \to \infty$, which is a contradiction since each $g^{-1}(A_n')$ separates the uniformly perfect set $E$.

\textbf{Case (ii):}  Again for each conformal annulus $g(A_n)$ of large modulus (and so for all large $n$), we apply Lemma~\ref{LemRoundAnn} to extract an essential true annulus $B_n=\mathrm{Ann}(g(w_n);s_n,S_n)\subseteq g(A_n)$ of modulus $\mathrm{mod~}B_n=\mathrm{mod~}A_n -K$, for some fixed $K>0$.  Note that no $\Delta(g(w_n),S_n)$ contains $0$.
Hence, the map $z \mapsto z^k$ has $k$ well-defined inverse branches on $B_n$, one of which must map $B_n$ to a conformal annulus $B_n'$ surrounding $g(z_n)$.  And so, $g^{-1}(B_n')$ is a conformal annulus surrounding $z_n$ and separating $E$, with modulus  $\mathrm{mod~}g^{-1}(B_n')=\mathrm{mod~}B_n'=\mathrm{mod~}B_n=\mathrm{mod~}A_n -K$.  This is a contradiction since $E$ is uniformly perfect and $\mathrm{mod~}A_n \to \infty$.
\end{proof}

The following result can be proven using the style of argument used to prove Proposition~\ref{PropUPImage}, and so we omit the details.

\begin{proposition}\label{PropPointThinImage}
Let $f:U \to \C$ be non-constant and analytic on open connected $U \subset \C$.  Suppose that compact $E \subset U$ is pointwise thin at $z \in E$ and $z$ is the only point of $E$ which maps to $f(z)$.  Then $f(E)$ is pointwise thin at $f(z) \in f(E)$.
\end{proposition}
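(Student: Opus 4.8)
The plan is to localise. Since the conformal annuli witnessing pointwise thinness of $E$ at $z$ have diameters tending to $0$ while containing $z$ in their bounded complementary components, they eventually lie inside any prescribed neighbourhood of $z$; and near $z$ the map $f$ is, up to a conformal change of coordinate, either the identity or a power map $w\mapsto w^{k}$. I would push the separating annuli through such a map, using Lemma~\ref{LemRoundAnn} to move back and forth between conformal and round annuli, where the modulus is immediate to compute.

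First, fix $\rho>0$ with $\overline{\Delta}(z,\rho)\subset U$ small enough that on $\Delta(z,\rho)$ one has the normal form $f(w)=f(z)+\psi(w)^{k}$, where $k\ge 1$ is the local degree of $f$ at $z$ and $\psi\colon\Delta(z,\rho)\to\C$ is conformal with $\psi(z)=0$ (when $f'(z)\ne 0$ then $k=1$ and $f$ is itself conformal on $\Delta(z,\rho)$). Let $(A_{n})$ be conformal annuli, each separating $E$, each with $z$ in the bounded component $V_{n}$ of $\C\setminus A_{n}$, and with $\mathrm{mod}\,A_{n}\to\infty$, $\mathrm{diam}\,A_{n}\to 0$. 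Since $z\in V_{n}$ and $\partial V_{n}$ is a boundary component of $A_{n}$, one checks — a bounded domain has the same diameter as its boundary — that $\mathrm{diam}(V_{n}\cup A_{n})\to 0$, so $V_{n}\cup A_{n}\subset\Delta(z,\rho)$ for all large $n$. For such $n$, $\psi(A_{n})$ is a conformal annulus with $0$ in its bounded complementary component and $\mathrm{mod}\,\psi(A_{n})=\mathrm{mod}\,A_{n}$, so by Lemma~\ref{LemRoundAnn} it contains a round essential annulus $B_{n}=\mathrm{Ann}(0;r_{n},R_{n})$ centred at $0$ with $\mathrm{mod}\,B_{n}=\mathrm{mod}\,A_{n}+O(1)\to\infty$, and with $R_{n}\to 0$ since $\psi$ is uniformly continuous near $z$. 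The map $w\mapsto w^{k}$ sends $B_{n}$ onto $\mathrm{Ann}(0;r_{n}^{k},R_{n}^{k})$, of modulus $k\,\mathrm{mod}\,B_{n}$, and after translating by $f(z)$ we obtain the round annulus
\[
A_{n}^{f}:=\mathrm{Ann}\bigl(f(z);r_{n}^{k},R_{n}^{k}\bigr),
\]
centred at $f(z)$, with $\mathrm{mod}\,A_{n}^{f}=k\,\mathrm{mod}\,B_{n}\to\infty$ and $\mathrm{diam}\,A_{n}^{f}=2R_{n}^{k}\to 0$.

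It then remains to verify that each $A_{n}^{f}$ separates $f(E)$ with $f(z)$ in its bounded complementary component. The placement of $f(z)$ is clear since it is the centre; and $f(E)$ meets the unbounded complementary component because $E$ contains a point $e_{0}\ne z$ (it meets the unbounded component of $A_{1}$), so that $R_{n}^{k}<|f(e_{0})-f(z)|$ for large $n$. The substantive step is $f(E)\cap A_{n}^{f}=\emptyset$. For $e\in E\cap\Delta(z,\rho)$ this is forced by the normal form: if $f(e)\in A_{n}^{f}$, then $\psi(e)^{k}\in\mathrm{Ann}(0;r_{n}^{k},R_{n}^{k})$, hence $\psi(e)\in\mathrm{Ann}(0;r_{n},R_{n})=B_{n}\subset\psi(A_{n})$, so $e\in\psi^{-1}(B_{n})\subset A_{n}$, contradicting $E\cap A_{n}=\emptyset$. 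The case $e\in E\setminus\Delta(z,\rho)$ is the one I expect to be the main obstacle: since $A_{n}^{f}\subset\overline{\Delta}\bigl(f(z),R_{n}^{k}\bigr)$ with $R_{n}^{k}\to 0$, one must know that $f$ does not carry this far part of $E$ arbitrarily close to $f(z)$, i.e. that $f(z)\notin\overline{f\bigl(E\setminus\Delta(z,\rho)\bigr)}$. This holds whenever $f$ is injective on $U$ — then $f$ is open with continuous inverse, so $f(e_{j})\to f(z)$ would force $e_{j}\to z$, impossible for $e_{j}$ outside $\Delta(z,\rho)$ — and in particular in the conformal case; in general one invokes the separation available in the situation to which the proposition is applied, following the style of the proof of Proposition~3.1 of~\cite{FalkStankewitz}. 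Granting this, the $A_{n}^{f}$ exhibit $f(E)$ as pointwise thin at $f(z)$.
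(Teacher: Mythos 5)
Your approach is exactly the one the paper sketches (localize, write $f=f(z)+\psi^{k}$ with $\psi$ conformal, pass to round annuli via Lemma~\ref{LemRoundAnn}, and absorb the local degree $k$ as a bounded distortion of the modulus), and your local analysis is correct: the separating annuli eventually lie in $\Delta(z,\rho)$, the moduli transform as you claim, and no point of $E\cap\Delta(z,\rho)$ can land in $A_{n}^{f}$. But the step you flag as ``the main obstacle'' is a genuine gap, and it cannot be filled: the proposition as stated is false for non-injective $f$. Take $U=\C$, $f(w)=w^{2}$, $z=1$, and $E=\{1\}\cup\{-(1+2^{-n}):n\in\N\}$. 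Since $1$ is isolated in $E$, the annuli $\mathrm{Ann}(1;n^{-2},n^{-1})$ show $E$ is pointwise thin at $1$; yet $f(E)=\{1\}\cup\{(1+2^{-n})^{2}\}$ accumulates at $1$ at a geometric rate, so, just as in Example~\ref{ExClosurePTnotPT}, any conformal annulus separating $f(E)$ with $1$ in its bounded complementary component must separate two consecutive points $(1+2^{-n-1})^{2},(1+2^{-n})^{2}$ whose mutual distance is comparable to their distance from $1$, and Teichm\"uller's modulus estimate then bounds its modulus by an absolute constant. The ``far'' part of $E$ near $-1$ is folded by $f$ onto a neighbourhood of $f(z)$, which is exactly the phenomenon you could not rule out. (Even $E=\{1,-1\}$ gives a degenerate counterexample: $f(E)$ is a singleton and cannot be separated by any annulus.)

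So your proof is complete precisely in the case where $f$ is injective on $U$ (the conformal case), or more generally whenever $f(z)\notin\overline{f\bigl(E\setminus\Delta(z,\rho)\bigr)}$ for some $\rho>0$; your closing remark that one must ``invoke the separation available in the situation'' is not a cosmetic aside but an additional hypothesis the proposition genuinely needs, and since the paper itself only offers a sketch, this is a defect of the statement rather than of your argument. Be aware that the issue propagates: the proposition is applied in the proof of Theorem~\ref{ThmAnalNIFS} to maps $\varphi\in\widetilde{\Phi}^{(1)}$ that are analytic but need not be injective, so some such extra separation hypothesis must be supplied there as well. One further small slip in the same vein: to show $f(E)$ meets the unbounded component of $\C\setminus A_{n}^{f}$ you use $R_{n}^{k}<|f(e_{0})-f(z)|$ for some $e_{0}\in E$ with $e_{0}\neq z$, which tacitly assumes $f(e_{0})\neq f(z)$ --- again automatic only under injectivity or an added hypothesis.
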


The following example shows that in the above hypothesis it is critical that $z$ is the only point of $E$ which maps to $f(z)$.
\begin{example}[Analytic image of pointwise thin is not pointwise thin]
Letting each $x_n=2^{-n^2}$, we set $E_1=\cup_{n \geq 0} [x_{2n+1},x_{2n}] \cup \{0\}$ and $E_2=\cup_{n \geq 0} [x_{2n+2},x_{2n+1}] \cup \{0\}$.  By considering the annuli $\mathrm{Ann}(0;x_{2n+2},x_{2n+1})$, one can show that
 $E_1$ is pointwise thin at 0.  Similarly, $E_2$ can be shown to be pointwise thin at 0. Note also that $E_1 \cup E_2 = [0,1]$.  Hence both $F_1=E_1 \cap (\{2^{-n}:n \in \N\} \cup \{0\})$ and $F_2=E_2 \cap (\{2^{-n}:n \in \N\} \cup \{0\})$ are pointwise thin (at each point).  Using the principal branch to define $g(z)=\sqrt{z+1}$, we see by Proposition~\ref{PropPointThinImage} that $g(F_1)$ is pointwise thin.  Similarly, we consider $-g(z)=-\sqrt{z+1}$ to show that $-g(F_2)$ is pointwise thin.  Note that $g$ and $-g$ are branches of the inverse of $f(z)=z^2-1$.  Letting $E = g(F_1) \cup -g(F_2)$, we then see that $E$ is compact and pointwise thin (and thus pointwise thin at both 1 and $-1$), but $f(E)=F_1 \cup F_2=\{2^{-n}:n \in \N\} \cup \{0\}$ is not pointwise thin at $f(1)=f(-1)=0$ (as noted in Example~\ref{ExClosurePTnotPT}).
\end{example}

The following result can easily be shown.

\begin{lemma}\label{LemImageDecrCompact}
Suppose $f: X \to Y$ is continuous and compact sets $A_n \subseteq X$ form a decreasing sequence.  Then $f(\cap_{n=1}^\infty A_n)=\cap_{n=1}^\infty f(A_n)$.
\end{lemma}

We close this section with a remark that relates to Proposition~\ref{PropPointThinImage}.

\begin{remark}  In~\cite{Shiga}, Shiga discusses the
quasiconformal equivalence of Cantor sets which appear as limits sets of non-autonomous IFSs and Julia sets of rational maps.
Many complex analysts are interested in the complements of various kind of Cantor sets, since the complements of Cantor sets are good examples of Riemann surfaces of infinite type.

We note the following:

(a)  If a compact set $K$ in the plane is uniformly perfect and a compact set $L$ in the plane is not uniformly perfect, then there is no quasiconformal map $g: \Bbb{C}\rightarrow \Bbb{C}$ such that $g(K)=L.$

(b) It is an open problem whether there are multiple quasiconformal equivalence classes of pointwise thin limit sets of non-autonomous IFSs.
\end{remark}

\section{Examples}\label{SecExamples}

In this section we provide examples to show that possible generalizations of Theorems~\ref{cor3}-\ref{cor} of Section~\ref{SecAutAttractMainThm} to the non-autonomous case do not hold. Specifically, we show that none of the following Statements 1-3 hold.  Examples to illustrate Theorem~\ref{Thm2Main} and Corollary~\ref{CorHNUP} are also given, along with an analysis of how these results generalize Theorem 4.1 of~\cite{SSS}.

\textbf{Statement 1:} (Generalization of Theorem~\ref{cor3})
Let $\Phi^{(1)}, \Phi^{(2)}, \dots$ be an analytic NIFS on $(U,X)$ such
that there exists $\eta>0$ with $|\varphi'| \geq \eta$ on $X$ for all $\varphi \in \cup_{j=1}^\infty \Phi^{(j)}$.  If $J$  has infinitely many points,
then $J$ is uniformly perfect.

\vskip.1in

\textbf{Statement 2:} (Generalization of Theorem~\ref{main})
Let $\Phi^{(1)}, \Phi^{(2)}, \dots$ be a conformal NIFS on $(U,X)$ such
that there exists $\eta>0$ with $|\varphi'| \geq \eta$ on $X$ for all $\varphi \in \cup_{j=1}^\infty \Phi^{(j)}$.
If $J$ contains more than one point, then $J$ is
uniformly perfect.

\vskip.1in

\textbf{Statement 3:} (Generalization of Theorem~\ref{cor})
Let $\Phi^{(1)}, \Phi^{(2)}, \dots$ be a conformal NIFS on $(U,X)$ such that there is a uniform bound on the cardinality of $\Phi^{(j)}$.
If $J$ contains more than one point, then $J$ is uniformly perfect.

\vskip.1in

\begin{example}\label{ExHNUPcantor}
Each set $I_{\bar{a}}$ in Theorem 4.1 of~\cite{SSS} is a limit set of a NIFS suitably chosen as follows.  Set $X=[0,1]$, fix $m \in \{2, 3, \dots\}$, and choose $0< a \leq \frac{1}{m+1}$.  Fix a sequence $\bar{a}=(a_1, a_2, \dots)$ such that $0<a_k \leq a$ for $k=1, 2, \dots$.  For each $k \in \N$, set $\Phi^{(k)}$ to be the collection $\{\varphi_1^{(k)}, \dots, \varphi_m^{(k)}\}$ of linear maps, each with derivative $a_k$, such that the images $\varphi_1^{(k)}(X), \dots, \varphi_m^{(k)}(X)$ are $m$ equally spaced subintervals of $X$ with $\varphi_1^{(k)}(X)=[0, a_k]$ and $\varphi_m^{(k)}(X)=[1-a_k, 1]$.  Example~\ref{ExCantor}, illustrated in Figure~\ref{TablePic}, is such an NIFS (with $m=2$).  Each set $X_k$ then coincides with what~\cite{SSS} calls $I_k$, and consists of $m^k$ basic intervals.  And the limit set $J$ then coincides with what~\cite{SSS} calls $I_{\bar{a}}$.

Theorem 4.1(1) of~\cite{SSS} shows that $J$ is perfect, but pointwise thin (and thus HNUP) when $\liminf a_k =0.$  We now show that this also follows from Corollary~\ref{CorHNUP}.  In order to use this corollary we set $U=\Delta(\frac12, 0.7)$ and $X=\overline{\Delta}(\frac12,0.6)$, recalling that Lemma~\ref{LemReplaceX2} shows that $J$ is unchanged by this change of $X$ from $[0, 1]$.  Selecting a subsequence $a_{k_n} \to 0$, the reader can quickly check that $\inf_n\{b_{k_n}\}>0, \inf_n\{\delta_{k_n}\}>0$, and $\eta_{k_n} =a_{k_n}\cdot \mathrm{diam}(X) \to 0$, and thus Corollary~\ref{CorHNUP} applies (since $\Phi$ clearly satisfies the Strong Separation Condition).
We also note that when $\liminf a_k =0$, Corollary~\ref{CorHNUP} shows $J$ is pointwise thin even when the strict setup above is considerably relaxed (e.g., the sets $\varphi_1^{(k)}([0, 1]), \dots, \varphi_m^{(k)}([0, 1])$ do not need to be \textit{equally spaced} subintervals of $[0, 1]$).

Theorem 4.1(2) of~\cite{SSS} shows that $J$ is uniformly perfect when $\liminf a_k >0$.  This also follows from Theorem~\ref{Thm2Main}, noting that we may choose $\eta = \inf a_k >0$ to satisfy the Derivative Condition and choose $\delta = 1-2a$ to satisfy the Two Point Separation Condition (even when $\liminf a_k =0$) since the images $\varphi_1^{(k)}(X)$ and $\varphi_m^{(k)}(X)$ are always a distance $1-2a_k$ apart.
We also note that when $\liminf a_k >0$, Theorem~\ref{Thm2Main} shows $J$ is uniformly perfect even when the strict setup above is considerably relaxed.  For example, the sets $\varphi_1^{(k)}(X), \dots, \varphi_m^{(k)}(X)$ do not need to be \textit{equally spaced} subintervals of $X$.  In fact, these sets could even overlap, as long as the Two Point Separation Condition is met (and $\liminf a_k >0$), and $J$ would still be uniformly perfect.
\end{example}

\begin{remark}\label{RemDerivCritical}
Note that Example~\ref{ExCantor}, with each $a_j=\frac{1}{j+2}$, shows that Statement 3 does not hold since $J$ would then be perfect but also be HNUP. It also illustrates that the Derivative Condition in Theorem~\ref{Thm2Main} is critical, even when all the other conditions are met.
\end{remark}

\begin{example}\label{ExHNUP}
Again, let $X=[0, 1]$.  Set $f_1(z)=\frac{z}3, f_2(z)=\frac{z+2}3$ and $f_3(z)=\frac13 (z-\frac12)+\frac12$.  We fix a sequence of postive integers $(l_j)$, and then create $\Phi$ by choosing $\Phi^{(1)}=\{f_1,f_2\}, \Phi^{(2)}=\Phi^{(3)}=\dots=\Phi^{(1+l_1)}=\{f_3\},\Phi^{(1+l_1+1)}=\{f_1,f_2\},\Phi^{(1+l_1+2)}=\Phi^{(1+l_1+3)}=\dots=
\Phi^{(1+l_1+1+l_2)}=\{f_3\}$, etc.  Hence, defining $L_0=0$ and $L_n=\sum_{j=1}^n(1+l_j)$, we have, for each $n=0, 1, 2, \dots$, $\Phi^{(L_n+1)}=\{f_1,f_2\}$ and $\Phi^{(L_n+i)}=\{f_3\}$ for $2\leq i \leq 1+l_{n+1}$.

We prove the following dichotomy.
\vskip.05in

\textbf{Claim:} We have that $\sup l_j =+\infty$ implies $J$ is perfect but pointwise thin (and thus HNUP), whereas $\sup l_j <+\infty$ implies $J$ is uniformly perfect.

We now consider a related NIFS $\widetilde{\Phi}$ such that $J(\widetilde{\Phi})=J(\Phi)$ by combining stages of consecutive $\Phi^{(j)}$ which equal $\{f_3\}$ (see Remark~\ref{RemCombStages}).  Specifically, we have $\widetilde{\Phi}^{(1)}=\Phi^{(1)}=\{f_1,f_2\}, \widetilde{\Phi}^{(2)}=\Phi^{(2)} \circ \Phi^{(3)} \circ \dots \circ \Phi^{(l_1 + 1)}=\{f_3^{l_1}\}, \widetilde{\Phi}^{(3)}=\Phi^{(1+l_1+1)}=\{f_1,f_2\}, \widetilde{\Phi}^{(4)}=\{f_3^{l_2}\}, \dots$, noting each iterate $f_3^{l_n}(z)=\frac1{3^{l_n}} (z-\frac12)+\frac12$.  More succinctly we have for each $n \in \N$, $\widetilde{\Phi}^{(2n-1)}=\{f_1,f_2\}$ and $\widetilde{\Phi}^{(2n)}=\{f_3^{l_n}\}$.
We now replace $\Phi$ by  $\widetilde{\Phi}$, hence the $X_n^{(j)}$ and $I^j$ below formally are constructed in reference to $\widetilde{\Phi}$ (see Figure~\ref{TablePicHNUP}).

\begin{figure}
  \centering
  \hskip-.2in
  \includegraphics[width=5.2in]{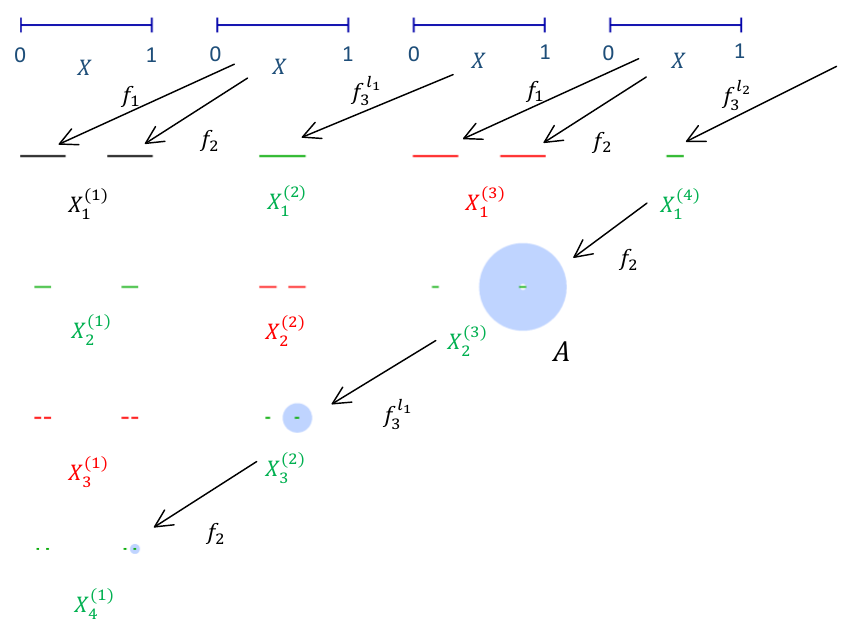}
  \caption{Table illustrating $\widetilde{\Phi}$ in Example~\ref{ExHNUP}, where $l_1=1$ and $l_2=2$.}\label{TablePicHNUP}
\end{figure}

%
%
%

We now suppose $\sup l_j <+\infty$ and prove $J$ is uniformly perfect.  Again we combine stages, this time doing so in order to utilize Theorem~\ref{Thm2Main}.  Create NIFS $\Psi$ with $J(\Psi)=J(\widetilde{\Phi})=J(\Phi)$ by stipulating that, for each $k \in \N$,  $\Psi^{(k)}=\widetilde{\Phi}^{(2k-1)}\circ \widetilde{\Phi}^{(2k)}=\{f_1 \circ f_3^{l_k},f_2 \circ f_3^{l_k}\}$.   Since the images $f_1 \circ f_3^{l_k}(X) \subseteq f_1(X)=[0,1/3]$ and $f_2 \circ f_3^{l_k}(X) \subseteq f_2(X)=[2/3, 1]$ are always separated by $\delta=1/3$, we see that the Two Point Separation Condition (with respect to $\Psi$)  is met.  Further the Derivative Condition (with respect to $\Psi$) is also met (when $\sup l_j <+\infty$, but not when $\sup l_j =+\infty$) since each map in $\Psi^{(k)}$ is linear with derivative $\frac{1}{3^{l_k+1}}$.  From Theorem~\ref{Thm2Main} it then follows that $J(\Psi)$ is uniformly perfect.

We now suppose that $\sup l_j =+\infty$ in order to show $J(\Phi)=J(\Psi)$ is perfect but pointwise thin. Perfectness follows from the fact that the diameter of each component of $X^{(1)}_{2n}$ shrinks to zero as $n \to \infty$ and each component of $X^{(1)}_{2n}$ contains two components of $X^{(1)}_{2n+2}$.  Now note that we may take $\Psi$ to be an NIFS on $(U,\widetilde{X})$ with $U=\Delta(\frac12, 0.7)$ and $\widetilde{X}=\overline{\Delta}(\frac12,0.6)$  Select a subsequence $l_{k_n} \to \infty$.  Since the images $f_1 \circ f_3^{l_k}(\widetilde{X}) \subseteq f_1(\widetilde{X})\subset \mathrm{Int}(\widetilde{X})$ and $f_2 \circ f_3^{l_k}(\widetilde{X}) \subseteq f_2(\widetilde{X}) \subset \mathrm{Int}(\widetilde{X})$, the reader can quickly check that $\Psi$ clearly satisfies the Strong Separation Condition and $\inf_n\{b_{k_n}\}>0, \inf_n\{\delta_{k_n}\}>0$, and $\eta_{k_n} =\frac{\mathrm{diam}(\widetilde{X})}{3^{l_{k_n}+1}}\to 0$ (since each map in $\Psi^{(k_n)}$ is linear with derivative $\frac{1}{3^{l_{k_n}+1}}$).  Hence, Corollary~\ref{CorHNUP} applies to show $J(\Phi)=J(\Psi)$ is pointwise thin.

\end{example}


\begin{remark}\label{RemA'NotInA}
Example~\ref{ExHNUP} shows that (when $\sup l_j =+\infty$) $J(\Phi)$ can be perfect yet fail to be uniformly perfect even when $\Phi$ (but not the modified NIFS $\widetilde{\Phi}$) satisfies both the Derivative Condition and M\"obius Condition of Theorem~\ref{Thm2Main}.  This example shows that the Two Point Separation Condition in Theorem~\ref{Thm2Main} is critical, and also shows that none of the above Statements 1-3 hold.  We also note that $J'=\{z:\phi_\omega(z)=z \textrm{ for some } \omega \textrm{ in some } I^k \}$ is not a subset of $J$ (e.g., 0 is a fixed point of $f_1$ but is not in $J$).  Hence, also $J$ is not forward invariant under the maps $\phi_\omega$  for $\omega \in I^k$.  Compared with statements (i) and (ii) as given for autonomous IFSs near the end of Section~\ref{SecAutAttractMainThm}, we note that the non-autonomous situation is far more delicate.
\end{remark}

\section{Applications to Non-Autonomous Julia Sets}\label{SectApplicationsJulia}
Given a sequence of complex polynomials $(P_j)$, define its \textit{Fatou set} $\F=\F((P_j))$ by
\[ \F = \{z \in \CC : \{P_n \circ \cdots \circ P_2 \circ P_1 \}_{n=1}^\infty
\mbox{ is a normal family on some neighborhood of } z \} \]
where we take our neighborhoods with respect to the spherical topology on $\CC$.  We then define the \textit{Julia set} $\J=\J((P_j))$ to be the complement $\CC \setminus \F$.

\begin{theorem}\label{ThmDichotomy}
Let $f$ be a polynomial on $\C$ of degree at least 2.  Suppose $f$ has no critical values in the closed unit disk $\overline{\D}$ and that $f^{-1}(\overline{\D}) \subset \D$.  Fixing a sequence $a_j \in \C$ with each $|a_j|>1$, we define polynomials $P_j(z)=a_jf(z)$.  Then
\begin{enumerate}[(1)]
\item $\J((P_j))$ is uniformly perfect if and only if $\limsup |a_j| < \infty$, and
\item $\J((P_j))$ is pointwise thin (and HNUP) if and only if $\limsup |a_j| = \infty$.
\end{enumerate}
\end{theorem}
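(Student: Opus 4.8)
The plan is to realize $\J((P_j))$ as the attractor $J(\Phi)$ of a conformal NIFS $\Phi$ satisfying the Strong Separation Condition, and then to get uniform perfectness from Theorem~2.1 of~\cite{FalkStankewitz} and pointwise thinness from Corollary~\ref{Cor}, according to the behaviour of $|a_j|$.

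\textbf{Constructing the NIFS.} Since the (finitely many) critical values of $f$ lie in $\{|w|>1\}$ and $f^{-1}(\overline{\D})$ is a compact subset of the open disk $\D$, a compactness argument lets me fix $\varepsilon>0$ so small that $f$ has no critical values in $\overline{\Delta}(0,1+\varepsilon)$ and $f^{-1}(\overline{\Delta}(0,1+\varepsilon))\subset\D$; set $U=\Delta(0,1+\varepsilon)$ and $X=\overline{\D}$. As $|a_j|>1$, one has $P_j^{-1}(U)=f^{-1}\!\big(\Delta(0,(1+\varepsilon)/|a_j|)\big)\subseteq f^{-1}(\Delta(0,1+\varepsilon))\subset\D\subseteq X\subset U$, and $P_j=a_jf$ has no critical points over $U$ (its critical points are those of $f$, whose critical values have modulus $>|a_j|>1$). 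Hence $P_j$ restricted to $P_j^{-1}(U)$ is an unramified proper cover of degree $d:=\deg f$ over the simply connected $U$, so $P_j^{-1}(U)$ splits into $d$ disjoint sheets, each mapped biholomorphically onto $U$; let $\Phi^{(j)}=\{\varphi_1^{(j)},\dots,\varphi_d^{(j)}\}$ consist of the corresponding $d$ inverse branches. Each $\varphi_i^{(j)}$ is conformal on $U$ with $\varphi_i^{(j)}(U)\subset\D\subseteq X$, and disjointness of the sheets gives the Strong Separation Condition. Thus $\Phi=(\Phi^{(j)})_j$ is a conformal NIFS on $(U,X)$ with $X$ connected and each $\Phi^{(j)}$ finite of size $d\ge 2$; by Lemma~\ref{LemProjImageClosure} its attractor is compact and equals $\pi_\Phi(I^\infty)$.

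\textbf{Identifying $J(\Phi)$ with the Julia set.} Write $Q_n:=P_n\circ\cdots\circ P_1$. For $\omega\in I^n$ the map $\varphi_\omega$ is an inverse branch of $Q_n$ on $U$, and since $Q_n$ has no critical values in $\overline{\D}$ (as above, any critical value of $Q_n$ has modulus $>1$) the $d^n$ sets $\varphi_\omega(\overline{\D})$, $\omega\in I^n$, are precisely the components of $Q_n^{-1}(\overline{\D})$; hence $X_n=Q_n^{-1}(\overline{\D})$ and $J(\Phi)=\bigcap_n Q_n^{-1}(\overline{\D})=\{z:Q_n(z)\in\overline{\D}\text{ for all }n\}=:\widehat{K}$. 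I then prove $\widehat{K}=\J((P_j))$. If $z\notin\widehat{K}$, then $|Q_N(z)|>1$ for some $N$; since $|w|\ge 1$ forces $|P_j(w)|=|a_j||f(w)|>1$ (because $f^{-1}(\overline{\D})\subset\D$ makes $|f|>1$ off $\D$), the family $\{Q_n\}_{n\ge N}$ omits the infinite set $\overline{\D}$ on a neighborhood of $z$ and is normal there by Montel's theorem; together with $Q_n\to\infty$ near $\infty$, this gives $\J\subseteq\widehat{K}$. Conversely, if $\{Q_n\}$ were normal on a connected neighborhood $V$ of some $z_0=\pi_\Phi(\omega)\in\widehat{K}$, extract $Q_{n_k}\to g$ locally uniformly; since $|Q_{n_k}(z_0)|\le 1$ the limit $g$ is holomorphic near $z_0$ and the convergence is Euclidean there, yet the pieces $E_k:=\varphi_{\omega_1\cdots\omega_{n_k}}(\overline{\D})$ contain $z_0$, have $\mathrm{diam}\,E_k\to 0$, and satisfy $Q_{n_k}(E_k)=\overline{\D}$ — incompatible with uniform convergence to the continuous map $g$. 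Hence $\J((P_j))=J(\Phi)$.

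\textbf{The dichotomy, and the main obstacle.} If $\limsup|a_j|=:M<\infty$, then on $X$ we have $|(\varphi_i^{(j)})'(w)|=\big(|a_j|\,|f'(\varphi_i^{(j)}(w))|\big)^{-1}\ge\big(M\max_{f^{-1}(\overline{\D})}|f'|\big)^{-1}=:\eta>0$, using that $f'$ is zero-free and bounded on the fixed compactum $f^{-1}(\overline{\D})$; so $\Phi$ satisfies the Derivative Condition — and, one verifies, the remaining hypotheses — of Theorem~2.1 of~\cite{FalkStankewitz}, whence $\J((P_j))=J(\Phi)$ is uniformly perfect. If $\limsup|a_j|=\infty$, choose $j_n$ with $|a_{j_n}|\to\infty$; the $d$ pieces $\varphi_i^{(j_n)}(X)$ are the components of $f^{-1}\!\big(\overline{\Delta}(0,1/|a_{j_n}|)\big)$, which contract onto the $d$ distinct points of $f^{-1}(0)$ (distinct since $0$, of modulus $<1$, is not a critical value). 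Therefore $\eta_{j_n}\to 0$, $\inf_n b_{j_n}>0$ (all pieces lie in $f^{-1}(\overline{\D})\subset\D$), and $\inf_n\delta_{j_n}>0$; taking $c=\mathrm{diam}(X)/\inf_n b_{j_n}$ as in the remark following Corollary~\ref{Cor}, that corollary applies and $\J((P_j))=J(\Phi)$ is pointwise thin, hence HNUP. Since a uniformly perfect set is never HNUP (being a uniformly perfect subset of itself), while a pointwise thin compact set always is, these two regimes deliver all four implications in (1) and (2). I expect the one genuinely delicate step to be the normal-families identification $\bigcap_n Q_n^{-1}(\overline{\D})=\J((P_j))$ above; the NIFS construction and the checking of the hypotheses of Corollary~\ref{Cor} and of Theorem~2.1 of~\cite{FalkStankewitz} are routine, the only mild point being the passage between the hyperbolic metric on $U$ used to set up $\Phi$ and the Euclidean metric in which Corollary~\ref{Cor} is phrased — harmless, since everything happens inside a fixed compact subset of $U$.
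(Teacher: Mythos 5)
Your argument is correct, and for part (2) it is essentially the paper's proof: the same NIFS of inverse branches $\varphi_i^{(j)}$ on $U=\Delta(0,1+\varepsilon)$ with $X=\overline{\D}$, the same verification that $\inf_n b_{j_n}>0$, $\inf_n\delta_{j_n}>0$, $\eta_{j_n}\to 0$ along a subsequence with $|a_{j_n}|\to\infty$, and the same appeal to Corollary~\ref{Cor}. You diverge from the paper in two places. First, for the inclusion $J(\Phi)\subseteq\J((P_j))$ the paper exploits the fact that $J(\Phi)$ is already known to be pointwise thin, hence has empty interior, so every point of $J(\Phi)$ (whose orbit stays in a compact subset of $\D$) is approximated by escaping points; you instead run a direct non-equicontinuity argument using that the shrinking pieces $E_k=\varphi_{\omega_1\cdots\omega_{n_k}}(\overline{\D})$ satisfy $Q_{n_k}(E_k)=\overline{\D}$. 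Your version is slightly longer but more robust: it does not presuppose thinness of $J(\Phi)$, which is exactly what allows you to use the identification $\J((P_j))=J(\Phi)$ in the bounded-coefficient regime as well. Second, for part (1) the paper simply cites Theorem~1.21 of~\cite{Su4} (Julia sets of bounded polynomial sequences are uniformly perfect), whereas you verify the Derivative Condition and invoke Theorem~2.1 of~\cite{FalkStankewitz} applied to $J(\Phi)$. That route is plausible and self-consistent with your stronger identification, but you leave ``the remaining hypotheses'' of that theorem unchecked; since the whole point of the paper's one-line citation is to avoid exactly this verification, you should either carry it out explicitly or fall back on the citation to~\cite{Su4}. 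Your closing observation that the four implications reduce to the two forward ones, because a pointwise thin compact set is HNUP and hence not uniformly perfect, matches the (implicit) logic of the paper.
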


\begin{remark}
For $a, c \in \C$ with $|c|>1$ and $|a|-|c|>1$, one may choose $f(z)=az^2+c$ in the above theorem.  Note then that $|z|\geq 1$ implies $|f(z)|=|az^2+c| \geq |a|-|c|>1$, i.e., $f(\C \setminus \D) \subseteq \C \setminus \overline{\D}$, which gives that $f^{-1}(\overline{\D}) \subset \D$.  Also, clearly the sole critical value of $f$ is $c \notin \overline{\D}$.  Hence applying the above theorem with such an $f$ and a suitable sequence $(a_j)$ with $\limsup |a_j| = \infty$, we can create a simple sequence of polynomials with pointwise thin (and thus HNUP) Julia set without the complicated arguments presented in~\cite{CSS}.
\end{remark}

\begin{proof}
(1) The Julia set of a bounded sequence of polynomials is known to be uniformly perfect (see Theorem~1.21 of~\cite{Su4}).

(2) Suppose $\limsup |a_j| = \infty$, and choose a subsequence $a_{j_n}$ such that $|a_{j_n}|\to \infty$.  We complete the proof by showing $\J((P_j))$ is pointwise thin and compact.  Calling $d$ the degree of $f$, we note that $f$ has $d$ well defined inverse branches $f_1, \dots, f_d$, on some open connected set $U=\Delta(0, 1+\epsilon) \supset \overline{\D}$ since all critical values of $f$ lie outside of $\overline{\D}$.  Furthermore, we note that we may choose $U$ such that $f^{-1}(U) \subset \overline{\D}$.  Hence, each $P_j$ has $d$ well defined inverse branches on $U$ given by $\varphi_i^{(j)}(z)=f_i(\frac{z}{a_j})$ for $i=1, \dots,d$.

For each $j \in \N$, let $\Phi^{(j)}=\{\varphi_1^{(j)}, \dots, \varphi_d^{(j)} \}$ and note that these families form an NIFS $\Phi$ on $(U, X)$ where $X=\overline{\D}$.  For each $j$, note that $\varphi^{(j)}_{i}(X)=f_{i}(\overline{\Delta}(0,\frac1{|a_{j}|})) \subset f_{i}(X)\subset \mathrm{Int}(X)$ for $i=1, \dots,d$.  Hence, $\Phi$ satisfies the Strong Separation Condition and, using the notation of Corollary~\ref{CorHNUP}, we also see that for each $n \in \N$,
$$b_{j_n}\geq b_0 := \min\{\mathrm{dist}(f_{i}(X),\partial X):i \in  \{1, \dots, d\} \}>0,$$
$$\delta_{j_n}\geq \delta_0 := \min\{\mathrm{dist}(f_{a}(X),f_{b}(X)):a,b \in \{1, \dots, d\} \textrm{ with }a \neq b \}>0$$  and
$$\eta_{j_n}=\max \{\mathrm{diam}(\varphi_i^{(j_n)}(X)):i \in  \{1, \dots, d\} \}$$
$$=\max \{\mathrm{diam}(f_{i}(\overline{\Delta}(0,\frac1{|a_{j_n}|}))):i \in  \{1, \dots, d\} \} \to 0.$$
Since $\inf\{b_{j_n}\}>0$, Corollary~\ref{CorHNUP} yields that $J(\Phi)$ is pointwise thin since $\frac{\delta_{j_n}}{\eta_{j_n}} \geq \frac{\delta_0}{\eta_{j_n}} \to \infty$.  Further, we note that $J(\Phi)$ is compact since each $I^{(j)}$ is finite.

The result then follows by showing that $\J((P_j))=J(\Phi)$.
Note that
$J(\Phi )=\{ z\in \Bbb{C}: P_{j}\circ \cdots \circ P_1(z)\in \overline{\Bbb{D}}
\mbox{ for each } j\}$.    Also note that $\C \setminus \overline{\D}$ is forward invariant under each $P_j$, and so it follows from Montel's Theorem that $\C \setminus J(\Phi) \subseteq \F((P_j))$, i.e., ${\mathcal J}((P_j)) \subseteq J(\Phi)$.
Since $J(\Phi)$ is pointwise thin, it is clear that $J(\Phi)$ has no interior.  This implies that any $z \in J(\Phi)$, which necessarily has as its orbit contained in the compact subset $f_1(X)\cup \dots \cup f_d(X)$ of $\D$, must be arbitrarily close to points whose orbits escape $\overline{\D}$.  Hence, $J(\Phi) \subseteq \J((P_j))$.
\end{proof}

\begin{corollary}\label{CorAlmostAll}
Let $f$ be a polynomial on $\C$ of degree at least 2.  Suppose $f$ has no critical values in $\overline{\D}$ and that $f^{-1}(\overline{\D}) \subset \D$.  Let $\tau$ be a probability measure on $\C \setminus \overline{\D}$ with unbounded support.  Then for almost all sequences $(a_j) \in \prod_{j=1}^\infty (\C \setminus \overline{\D})$ with respect to $\tilde{\tau}= \bigotimes_{j=1}^\infty \tau$, the maps $P_j=a_j \cdot f$ define a sequence of polynomials whose Julia set $\J((P_j))$ is pointwise thin.
\end{corollary}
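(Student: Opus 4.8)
The plan is to reduce the statement to Theorem~\ref{ThmDichotomy}(2) via a routine application of the second Borel--Cantelli lemma. Since $f$ satisfies the standing hypotheses (degree at least $2$, no critical values in $\overline{\D}$, and $f^{-1}(\overline{\D})\subset\D$) and every $a_j$ lies in $\C\setminus\overline{\D}$, so that $|a_j|>1$, Theorem~\ref{ThmDichotomy} applies to the sequence $P_j=a_j\cdot f$ for \emph{every} $(a_j)\in\prod_{j=1}^\infty(\C\setminus\overline{\D})$; in particular, $\J((P_j))$ is pointwise thin precisely when $\limsup_{j\to\infty}|a_j|=\infty$. Thus it suffices to show that
\[
\tilde\tau\Bigl(\Bigl\{(a_j):\ \limsup_{j\to\infty}|a_j|=\infty\Bigr\}\Bigr)=1.
\]

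First I would observe that, because $\tau$ has unbounded support, for each $M\in\N$ there is a point $z_0\in\mathrm{supp}\,\tau$ with $|z_0|>M$; since $\{z:|z|>M\}$ is an open neighborhood of $z_0$, we get $p_M:=\tau(\{z:|z|>M\})>0$. Next, for fixed $M$ consider the events $E_j^M=\{(a_j):|a_j|>M\}$ in the product probability space $\bigl(\prod_j(\C\setminus\overline{\D}),\tilde\tau\bigr)$. These events are independent, since the coordinates are $\tilde\tau$-independent by definition of $\tilde\tau=\bigotimes_j\tau$, and $\sum_{j=1}^\infty\tilde\tau(E_j^M)=\sum_{j=1}^\infty p_M=\infty$, so the second Borel--Cantelli lemma gives
\[
\tilde\tau\bigl(\textstyle\limsup_j E_j^M\bigr)=\tilde\tau\bigl(\{(a_j):\ |a_j|>M\ \text{for infinitely many }j\}\bigr)=1.
\]
Finally I would intersect over $M\in\N$: the set $\Omega=\bigcap_{M\in\N}\limsup_j E_j^M$ has $\tilde\tau(\Omega)=1$ as a countable intersection of full-measure sets, and for every $(a_j)\in\Omega$ we have $\limsup_j|a_j|\ge M$ for all $M$, i.e.\ $\limsup_j|a_j|=\infty$. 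Combining this with Theorem~\ref{ThmDichotomy}(2) completes the proof.

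There is essentially no serious obstacle here; the argument is a standard zero--one law computation. The only points requiring a (trivial) check are that unboundedness of $\mathrm{supp}\,\tau$ forces $\tau(\{|z|>M\})>0$ rather than merely $\ge0$, and that the events $\{|a_j|>M\}$ are genuinely measurable and independent in the infinite product space, both of which are immediate from the definitions of support and of $\tilde\tau$. One could alternatively bypass Borel--Cantelli at the last step by noting directly that $\tilde\tau(\{\sup_j|a_j|\le M\})=\prod_j(1-p_M)=0$ for each $M\in\N$, and again intersecting over $M$.
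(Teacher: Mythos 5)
Your proof is correct and follows essentially the same route as the paper's: both reduce to Theorem~\ref{ThmDichotomy}(2) by showing the set of bounded sequences is $\tilde{\tau}$-null (the paper phrases this via a product-measure/law-of-large-numbers computation on the sets $B_N=\{(a_j):|a_j|\le N\text{ for all }j\}$, which is precisely your alternative closing remark). Your explicit use of the second Borel--Cantelli lemma and the observation that unbounded support forces $\tau(\{|z|>M\})>0$ are just slightly more detailed versions of the same argument.
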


\begin{proof}
For $N \in \N$, set $B_N=\{(a_j):|a_j| \leq N \textrm{ for all } j\}$ and note that since $\tau$ has unbounded support, $\tilde{\tau}(B_N)=0$ by the law of large numbers.  Hence, $\tilde{\tau}(\cup_{N \in \N} B_N)=0$, i.e., the set of bounded sequences has $\tilde{\tau}$-measure zero.  The result then follows from Theorem~\ref{ThmDichotomy}.
\end{proof}

\section{Proof of the Main Theorems}\label{SectProofs}

In this section we first prove Theorems~\ref{Thm2Main} and~\ref{ThmAnalNIFS} regarding uniform perfectness, and then prove Theorem~\ref{Thm2MainHNUP}, Corollary~\ref{CorHNUP}, and Theorem~\ref{ThmAnalNIFS-HNUP} regarding pointwise thinness.

We begin by proving a crucial lemma that will be key in providing a uniform Lipschitz constant for certain locally defined inverse maps.

\begin{lemma}\label{LemInverseLip}
Let $\mathcal{F}$ be a collection of analytic functions mapping non-empty open set $U \subset \C$ into compact set $X \subset U$ such that there exists $\eta>0$ where for all $f \in \mathcal{F}$ we have $|f'|\geq \eta$ on $X$.  Then there exists $r_0>0$ such that for every $f \in \mathcal{F}$ and $x \in X$, we have $|g'| \leq \frac{2}{\eta}$ on $\Delta(f(x), r_0)$ where $g$ is the local branch of the inverse of $f$ such that $g(f(x))=x$.
\end{lemma}

Note that this lemma does not require the maps $f \in \mathcal{F}$ to be M\"obius, or even globally conformal on $U$.

\begin{proof}
First note that by compactness, there exists $r>0$ such that for all $x \in X$ we have $\Delta(x,r) \subseteq U$.
Applying Lemma 2.3 of~\cite{RS4}, where $M>0$ is taken large enough so that $X \subset \Delta(0,M)$, we see that for some $\rho>0$ each $f \in \mathcal{F}$ is one-to-one on $\Delta(x, \rho)$ for every $x \in X$.  (Note that $\rho$ is independent of $f \in \mathcal{F}$ and $x \in X$.)
By the Koebe distortion theorem (see, e.g., Theorem 1.6 of~\cite{CG}), there exists $0<r_1 < \rho$ such that for every $f \in \mathcal{F}$ and $x \in X$, we have $|f'| \geq \frac{\eta}2$ on $\Delta(x, r_1)$.  By the Koebe 1/4 Theorem, for each $x \in X$ we then see that $f(\Delta(x,r_1)) \supseteq \Delta(f(x), \frac{r_1 \eta}4)$.  Hence, calling $r_0=\frac{r_1 \eta}4$ we have that a branch $g$ of $f^{-1}$ is defined on $\Delta(f(x), r_0)$ such that $g(f(x))=x$ and has $|g'| \leq \frac2{\eta}$ there.
\end{proof}

\begin{remark}\label{RemTopRow}
Under the hypotheses of Theorem~\ref{Thm2Main}, the Derivative Condition along with the distortion theorems used in the proof of the above lemma yield that
$\inf\{\mathrm{diam}(\varphi(X)):\varphi \in \cup_{j \in \N} \Phi^{(j)}\}>0$.  To see this, choose $x_0 \in X$ and $r >0$ such that $\Delta(x_0, r) \subset X$ (note that $X$ must have interior since it contains the open sets $\varphi(U)$ for all $\varphi \in \cup_{j \in \N} \Phi^{(j)}$).  Fixing $r < \rho$ from the above proof, we see that by the Koebe 1/4 Theorem, $\varphi(X) \supseteq \varphi(\Delta(x_0, r)) \supseteq \Delta(\varphi(x_0),\frac{r \eta}{4})$ for all $\varphi \in \cup_{j \in \N} \Phi^{(j)}$, which justifies the claim.
\end{remark}

\begin{proof}[Proof of Theorem~\ref{Thm2Main}]
We begin by replacing $X$, if it is not connected, by the connected $\widetilde{X} \subset U$ as in Remark~\ref{RemConnX}, noting that the hypotheses are still met.  Indeed, the M\"obius and Two Point Separation Conditions are clearly still satisfied with respect to $\widetilde{X} \supset X$.  The Derivative Condition also still holds with respect to $\widetilde{X} \supset X$ though not as trivially.  We show this by contradiction.  Assume $\varphi_n'(z_n) \to 0$ as $n \to \infty$ where each $z_n \in \widetilde{X}$ and each $\varphi_n \in \cup_{j \in \N} \Phi^{(j)}$.  By compactness we may suppose $z_n \to z_0 \in \widetilde{X}$.  Since by Montel's Theorem, the family $\cup_{j \in \N} \Phi^{(j)}$ is normal on $U$, we may suppose $\varphi_n$ converges normally on $U$ to some map $\varphi$.  Hence, we must have $\varphi'(z_0)=0$.  Since each map in $\cup_{j \in \N} \Phi^{(j)}$ is M\"obius, and thus one-to-one on $U$, we see by Hurwitz's Theorem that $\varphi$ must be constant.  This implies that for any $x \in X$, we must have $\varphi_n'(x) \to \varphi'(x)=0$, but this contradicts the Derivative Condition on $X$ which gives that each $|\varphi_n'(x)|\geq \eta$.

It suffices to prove $\overline{J^{(1)}}$ is uniformly perfect since clearly each sub-NIFS of $\Phi$ which generates  ${J^{(j)}}$ must also satisfy conditions (i)-(iii).    First note that by Remark~\ref{RemSep} we see that $\textrm{diam}(J^{(1)}) \geq \delta$ and so $J=J^{(1)}$ has more than one point.
Recalling Remark~\ref{RemTrueAnn} and Remark~\ref{RmkAnnInf}, we consider a \textit{true} annulus $A_1$ which separates $\overline{J}$ and which has modulus large enough so that any conformal annulus $B \subset \CC$ with $\mathrm{mod~}B \geq \mathrm{mod~}A_1 -1$ contains an essential true annulus $B' \subset B$ such that $\mathrm{mod~}B'=\frac13 \mathrm{mod~}B$.  Since the true annulus $A_1$ must also separate $J = \cap_{k=1}^\infty X_k$, we apply Lemma~\ref{LemmaAnn2} to obtain a true annulus $A \subset A_1$ which separates some $X_{k_0}$ and has $\mathrm{mod~}A = \mathrm{mod~}A_1 -1$.  We complete the proof by showing that there exists an upper bound on $\mathrm{mod~} A$.

Recall the superscript notation of Section~\ref{Intro}, in particular, that $X_{k_0}^{(1)} = X_{k_0}$.
By the invariance condition~(\ref{EqInvarCond}) in Remark~\ref{RmkInvariance}, we have $\bigcup_{i \in I^{(1)}} \varphi_i^{(1)}(X_{k_0-1}^{(2)})=X_{k_0}^{(1)},$ and so there must be some $\varphi_{i_1}^{(1)} \in \Phi^{(1)}$ such that $A$ surrounds some point of $\varphi_{i_1}^{(1)}(X_{k_0-1}^{(2)})$ (i.e., the bounded component of $\C \setminus A$ contains a point of $\varphi_{i_1}^{(1)}(X_{k_0-1}^{(2)})$).  Since $A$ separates $X_{k_0}^{(1)}$, we must have one of two cases: Case (I) $A$ surrounds all of $\varphi_{i_1}^{(1)}(X_{k_0-1}^{(2)})$, or Case (II) $A$ separates $\varphi_{i_1}^{(1)}(X_{k_0-1}^{(2)})$. See Figure~\ref{TableWithAnn}.

\begin{figure}[H]
  \centering
  \includegraphics[width=5.3in]{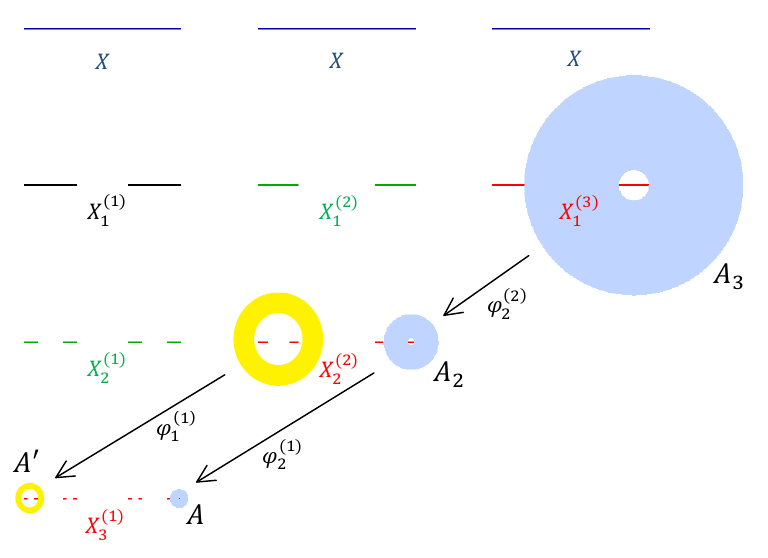}
  \caption{Illustration of the proof of Theorem~\ref{Thm2Main} using the system of Example~\ref{ExCantor}. Note that $A$ and $A_2=(\varphi_2^{(1)})^{-1}(A)$ are both of Case (II) type, whereas $A'$ is of Case~(II) type, but $(\varphi_1^{(1)})^{-1}(A')$ is of Case (I) type.}\label{TableWithAnn}
\end{figure}

\textbf{Case (I):} Write $A= \mathrm{Ann}(z;r,R)$ and suppose it surrounds all of $\varphi_{i_1}^{(1)}(X_{k_0-1}^{(2)})$.
Hence $(\varphi_{i_1}^{(1)})^{-1}(\overline{\Delta(z,r)}) \supseteq X_{k_0-1}^{(2)} \supseteq J^{(2)}$, which by the Two Point Separation Condition (see Remark~\ref{RemSep}) gives that $\mathrm{diam}\left((\varphi_{i_1}^{(1)})^{-1}(\overline{\Delta(z,r)})\right) \geq \delta$.

From Lemma~\ref{LemRbound} it follows that we only need to consider cases where $R < 2\cdot\mathrm{diam}(X)$.
We now establish an upper bound for $\mathrm{mod~}A = \log (R/r)$ by finding a positive lower bound for $r$.

Notice that due to the Derivative Condition and Lemma~\ref{LemInverseLip}, there exists $r_0>0$ such that for any $x \in X$ and $\varphi \in \cup_{j \in \N} \Phi^{(j)}$, we have $|(\varphi^{-1})'| \leq \frac{2}{\eta}$ on $\Delta(\varphi(x), r_0)$.

We now suppose $r<\min\{\frac{\delta \eta}{4},\frac{r_0}2\}$, from which we derive a contradiction, thus producing a lower bound for $r$ and completing the proof for Case (I).  Since $\overline{\Delta(z,r)}$ meets $\varphi_{i_1}^{(1)}(X_{k_0-1}^{(2)})$, we may choose $x_0 \in X_{k_0-1}^{(2)} \subseteq X$ such that $\varphi_{i_1}^{(1)}(x_0) \in \overline{\Delta(z,r)} \subset \overline{\Delta(\varphi_{i_1}^{(1)}(x_0), 2r)}\subset \Delta(\varphi_{i_1}^{(1)}(x_0), r_0)$.  Since $\left|\left((\varphi_{i_1}^{(1)})^{-1}\right)'\right|\leq \frac{2}{\eta}$ on $\Delta(\varphi_{i_1}^{(1)}(x_0), r_0)$ which contains the convex set $\overline{\Delta(z,r)}$, we see that $\mathrm{diam}\left((\varphi_{i_1}^{(1)})^{-1}(\overline{\Delta(z,r)})\right) \leq \frac{4r}{\eta} < \delta$, which is a contradiction.

\textbf{Case (II):}  Suppose $A$ separates $\varphi_{i_1}^{(1)}(X_{k_0-1}^{(2)})$.  Hence, the conformal annulus  $A_2=(\varphi_{i_1}^{(1)})^{-1}(A)$ must separate $X_{k_0-1}^{(2)}$ and must have $\mathrm{mod~}A_2=\mathrm{mod~}A$.
In terms of Figure~\ref{TableWithAnn}, we have constructed an annulus $A_2$ which separates $X_{k_0-1}^{(2)}$ in the picture diagonally up and right of the picture of $X_{k_0}^{(1)}$.  
Note, however, that $A_2$ will contain $\infty$ when $\varphi_{i_1}^{(1)}(\infty) \in A$, and so we must allow for this possibility.

Hence we may repeat our process as follows.  Since $A_2$ separates the set $X_{k_0-1}^{(2)}=\bigcup_{i \in I^{(2)}} \varphi_i^{(2)}(X_{k_0-2}^{(3)})$, we must have at least one of two cases: Case (I') one component of
$\CC \setminus A_2$ contains some $\varphi_{i_2}^{(2)}(X_{k_0-2}^{(3)})$ while the other component contains some other  $\varphi_{i_2'}^{(2)}(X_{k_0-2}^{(3)})$, or Case (II') $A_2$ separates some $\varphi_{i_2}^{(2)}(X_{k_0-2}^{(3)})$.  If Case (I') holds, extract an essential true annulus $A_2' \subset A_2$ with $\mathrm{mod~}A_2'= \frac13 \mathrm{mod~}A_2= \frac13 \mathrm{mod~}A$, which must surround all of either $\varphi_{i_2}^{(2)}(X_{k_0-2}^{(3)})$ or $\varphi_{i_2'}^{(2)}(X_{k_0-2}^{(3)})$,  and then bound $\mathrm{mod~}A'_2$ as in Case (I) above.  If Case (II') holds, we repeat the process of Case (II) above, noting that we do not need to first extract a true annulus from $A_2$.



This process must then end by eventually applying the method of Case (I), or by eventually producing (after $k_0$ steps) an annulus $A_{k_0}$, with the same modulus as of $A$, which separates $X_1^{(k_0)}$.  The proof is thus concluded by showing that such a modulus is uniformly bounded independent of the choice of $k_0$.  First, extract an essential true annulus $A_{k_0}'=\mathrm{Ann}(z';r',R') \subset A_{k_0}$ with $\mathrm{mod~}A_{k_0}'= \frac13 \mathrm{mod~}A_{k_0}= \frac13 \mathrm{mod~}A$, which necessarily separates $X_1^{(k_0)}$.  Again by Lemma~\ref{LemRbound}, it is then clear that we only need to produce a lower bound for $r'$.  This follows easily from Remark~\ref{RemTopRow} by noting that $\overline{\Delta(z',r')}$ would need to contain the connected set $\varphi(X)$ for some $\varphi \in \Phi^{(k_0)}$.

Examination of the above proof shows that $\mathrm{mod~}A_1$ is bounded above by a constant which depends only on $\delta$ and $\eta$.
\end{proof}

Note that the step of extracting a true annulus of one-third the modulus is done only at most once in the above proof.

\begin{proof}[Proof of Theorem~\ref{ThmAnalNIFS}]
By Proposition~\ref{PropUPImage}, for each $\varphi \in \widetilde{\Phi}^{(1)}$ the set $\phi\left(\overline{J^{(n)}}\right)$ is uniformly perfect. Lemma~\ref{LemInvClosedLimitSets} gives that $\overline{J^{(1)}}=\bigcup_{\varphi \in \widetilde{\Phi}^{(1)}}\varphi\left(\overline{J^{(n)}}\right)$, and the result follows since the finite union of uniformly perfect sets is uniformly perfect.
\end{proof}

We now prove the Theorem~\ref{Thm2MainHNUP}, Corollary~\ref{CorHNUP}, and Theorem~\ref{ThmAnalNIFS-HNUP} regarding pointwise thinness.

\begin{proof}[Proof of Theorem~\ref{Thm2MainHNUP}]
Note that since the NIFS $\Phi$ is conformal and both the annulus $A_{j_n}$ and its bounded complementary component lie inside $X \subset U$, we see that $\pi_\Phi(\omega) \in \varphi_{\omega_1 \cdots \omega_{j_n-1}}(\varphi_{m_n}^{(j_n)}(X))$ (see Remark~\ref{RemPiecesContainLimitPts}) is surrounded by the conformal annulus $A_{j_n}'=\varphi_{\omega_1 \cdots \omega_{j_n-1}}(A_{j_n})$, which separates $\varphi_{\omega_1 \cdots \omega_{j_n-1}}(X_1^{(j_n)})\subseteq X_{j_n}^{(1)}$.  See Figure~\ref{FigImageAnn}.  We claim that $A_{j_n}' \cap X_{j_n}^{(1)}=\emptyset$, from which it follows that $A_{j_n}'$ separates $X_{j_n}^{(1)}$, and thus separates $J$.  Since $\mathrm{mod}~A_{j_n'}=\mathrm{mod}~A_{j_n} \to \infty$ with $\mathrm{diam}(A_{j_n}') \to 0$, we see that $J$ is pointwise thin at $\pi_\Phi(\omega)$.

To prove the claim, suppose towards a contradiction that $A_{j_n}'$ meets
\begin{eqnarray*}
X_{j_n}^{(1)}=\bigcup_{\omega^* \in I^{j_n}} \varphi_{\omega^*}(X)
&=&\bigcup_{\omega^*_1 \cdots \omega^*_{j_n-1} \in I^{j_n-1}} \bigcup_{\omega^*_{j_n} \in I^{(j_n)}} \varphi_{\omega^*_1 \cdots \omega^*_{j_n-1}}(\varphi_{\omega^*_{j_n}}(X))\\
&=&\bigcup_{\omega^*_1 \cdots \omega^*_{j_n-1} \in I^{j_n-1}}  \varphi_{\omega^*_1 \cdots \omega^*_{j_n-1}}(X_1^{(j_n)}).
\end{eqnarray*}
Hence, $A_{j_n}'$  meets
$\varphi_{\omega^*_1 \cdots \omega^*_{j_n-1}}(X_1^{(j_n)})$ for some $\omega^*_1 \cdots \omega^*_{j_n-1} \in I^{j_n-1}$.
Note that $\omega^*_1 \cdots \omega^*_{j_n-1} \neq \omega_1 \cdots \omega_{j_n-1}$ since $A_{j_n}'$ separates $\varphi_{\omega_1 \cdots \omega_{j_n-1}}(X_1^{(j_n)})$.  However, since $X_{1}^{(j_n)} \subseteq X$, $A_{j_n} \subseteq X$, and $\varphi_{\omega^*_1 \cdots \omega^*_{j_n-1}}(X) \cap \varphi_{\omega_1 \cdots \omega_{j_n-1}}(X) = \emptyset$ by the strong separation condition (see the discussion preceding Definition~\ref{Wordsdef}), we see that
$\varphi_{\omega^*_1 \cdots \omega^*_{j_n-1}}(X_{1}^{(j_n)})$ cannot meet $A_{j_n}'=\varphi_{\omega_1 \cdots \omega_{j_n-1}}(A_{j_n})$, which is a contradiction.
\end{proof}

\begin{figure}
  \centering
  \includegraphics[width=5.6in]{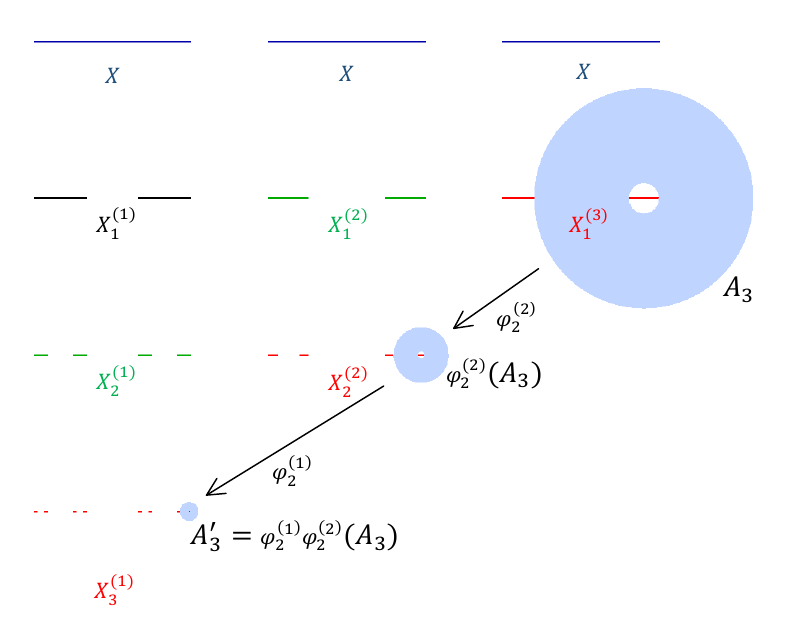}
  \caption{Table illustrating the proof of Theorem~\ref{Thm2Main} using the system of Example~\ref{ExCantor}. }\label{FigImageAnn}
\end{figure}

\begin{proof}[Proof of Corollary~\ref{CorHNUP}]
Pick an arbitrary $\omega \in I^\infty$.  For each $n$, choose some $z_n\in \varphi_{\omega_{j_n}}^{(j_n)}(X)$, and define $A_{{j_n}}=\mathrm{Ann}(z_n;\eta_{j_n},\frac{\delta_{j_n}}{c})$, which by definition of $\eta_{j_n}$ must surround $\varphi_{\omega_{j_n}}^{(j_n)}(X)$.  Hence by definition of $\delta_{j_n}$, the annulus $A_{{j_n}}$ must separate $X^{(j_n)}_1$.  Lastly, since $\frac{\delta_{j_n}}{c} \leq b_{j_n} \leq \mathrm{dist}(\varphi^{(j_n)}_{\omega_{j_n}}(X),\partial X)$, we see that $\Delta(z_n,\frac{\delta_{j_n}}{c}) \subseteq X$.  Thus by Theorem~\ref{Thm2Main}, noting that $\mathrm{mod~}A_{j_n} =\log \frac{\delta_{j_n}}{c\eta_{j_n}} \to \infty$, we see that $J$ is pointwise thin at $\pi_\Phi(\omega)$.  The proof is then complete by noting $J=\pi_\Phi(I^\infty)$ since $\Phi$ satisfies the Strong Separation Condition (as mentioned just before the statement of Lemma~\ref{LemProjImageClosure}).
\end{proof}

\begin{proof}[Proof of Theorem~\ref{ThmAnalNIFS-HNUP}]
Consider the analytic NIFS $\widetilde{\Phi}$ given by $\widetilde{\Phi}^{(1)}=\Phi^{(1)} \circ \cdots \circ \Phi^{(n-1)}$ and $\widetilde{\Phi}^{(j)}=\Phi^{(j+n-2)}$ for each $j>1$.  Hence, by Remark~\ref{RemCombStages}, we see that $J(\Phi)=J(\widetilde{\Phi})$.
By Proposition~\ref{PropPointThinImage}, for each $\varphi \in \widetilde{\Phi}^{(1)}$ the set $\phi\left(\overline{J^{(n)}}\right)$ is pointwise thin. Lemma~\ref{LemInvClosedLimitSets} gives that $\overline{J^{(1)}}=\bigcup_{\varphi \in \widetilde{\Phi}^{(1)}}\varphi\left(\overline{J^{(n)}}\right)$, and the result follows since the finite disjoint union of compact pointwise thin sets is pointwise thin.
\end{proof}

\section*{Acknowledgments} This work was partially supported by a grant from the Simons Foundation (\#318239 to Rich Stankewitz).  The last author was partially supported by
JSPS Kakenhi 19H01790.


\begin{thebibliography}{10}

\bibitem{A}
Lars~V. Ahlfors.
\newblock {\em Conformal invariants: topics in geometric function theory}.
\newblock McGraw-Hill Book Co., New York-D\"usseldorf-Johannesburg, 1973.
\newblock McGraw-Hill Series in Higher Mathematics.

\bibitem{BP}
A.~F. Beardon and Ch. Pommerenke.
\newblock The {P}oincar\'e metric of plane domains.
\newblock {\em J. London Math. Soc. (2)}, 18(3):475--483, 1978.

\bibitem{Be}
Alan~F. Beardon.
\newblock {\em Iterations of Rational Functions}.
\newblock Springer-Verlag, New York, 1991.

\bibitem{CG}
Lennart Carleson and Theodore~W. Gamelin.
\newblock {\em Complex Dynamics}.
\newblock Springer-Verlag, New York, 1993.

\bibitem{CSS}
Mark Comerford, Rich Stankewitz, and Hiroki Sumi.
\newblock Hereditarily non uniformly perfect non-autonomous {J}ulia sets.
\newblock {\em Discrete and Continuous Dynamical System - A}, 40(1):33--46,
  2020.

\bibitem{Hu}
J.E. Hutchinson.
\newblock Fractals and self similarity.
\newblock {\em Indiana University Math Journal}, 30:713--747, 1981.

\bibitem{UrbBook}
R.~Daniel Mauldin and Mariusz Urba\'{n}ski.
\newblock {\em Graph directed {M}arkov systems}, volume 148 of {\em Cambridge
  Tracts in Mathematics}.
\newblock Cambridge University Press, Cambridge, 2003.
\newblock Geometry and dynamics of limit sets.

\bibitem{MU1}
R.D. Mauldin and M.~Urbanski.
\newblock Dimensions and measures in the infinite iterated function systems.
\newblock {\em Proc. London Math. Soc.}, 73:105--154, 1996.

\bibitem{MU2}
R.D. Mauldin and M.~Urbanski.
\newblock Conformal iterated function systems with applications to the geometry
  of continued fractions.
\newblock {\em Trans. Amer. Math. Soc.}, 351(12):4995--5025, 1999.

\bibitem{McM}
C.~McMullen.
\newblock {\em Complex Dynamics and Renormalization}.
\newblock Princeton Univeristy Press, 1994.

\bibitem{P}
Ch. Pommerenke.
\newblock Uniformly perfect sets and the {P}oincar\'e metric.
\newblock {\em Arch. Math.}, 32:192--199, 1979.

\bibitem{RU}
Lasse Rempe-Gillen and Mariusz Urba\'{n}ski.
\newblock Non-autonomous conformal iterated function systems and {M}oran-set
  constructions.
\newblock {\em Trans. Amer. Math. Soc.}, 368(3):1979--2017, 2016.

\bibitem{Shiga}
Hiroshige Shiga.
\newblock On the quasiconformal equivalence of dynamical Cantor sets.
\newblock Preprint. https://arxiv.org/abs/1812.07785.


\bibitem{RS3}
Rich Stankewitz.
\newblock Uniformly perfect sets, rational semigroups, {K}leinian groups and
  {IFS}'s.
\newblock {\em Proc. Amer. Math. Soc.}, 128(9):2569--2575, 2000.

\bibitem{RS4}
Rich Stankewitz.
\newblock Uniformly perfect analytic and conformal attractor sets.
\newblock {\em Bull. London Math. Soc.}, 33(3):320--330, 2001.

\bibitem{SSS}
Rich Stankewitz, Hiroki Sumi, and Toshiyuki Sugawa.
\newblock Hereditarily non uniformly perfect sets.
\newblock {\em Discrete and Continuous Dynamical Systems S}, 12(8), 2019.

\bibitem{Sug2}
T.~Sugawa.
\newblock Uniformly perfect sets -- analytic and geometric aspects.
\newblock {\em Sugaku Expositions}, 16(2):225--242, 2003.

\bibitem{Su4}
Hiroki Sumi.
\newblock Semi-hyperbolic fibered rational maps and rational semigroups.
\newblock {\em Ergod.Th.\& Dynam. Sys.}, 26:893--922, 2006.

\bibitem{Wen}
Zhiying Wen.
\newblock Moran sets and {M}oran classes.
\newblock {\em Chinese Sci. Bull.}, 46(22):1849--1856, 2001.

\end{thebibliography}
\end{document}